\newtheorem{Theorem}{Theorem}[section]
\newtheorem{Lemma}[Theorem]{Lemma}
\newtheorem{Proposition}[Theorem]{Proposition}
\numberwithin{equation}{section}
\numberwithin{figure}{section}
\begin{document}
	
\title[]{Existence of solutions to relativistic non-Abelian Chern-Simons-Higgs vortex equations on graphs}
%\thanks{$^\S$   was supported by NSFC (11501318, 11871302), the China Postdoctoral Science Foundation (2017M612230).}
\author[Y. Hu]{Yuanyang Hu$^1$}
\thanks{$^1$ School of Mathematics and Statistics, Henan University, Kaifeng, Henan 475004, P. R. China.}

\thanks{{\bf Emails:} {\sf yuanyhu@mail.ustc.edu.cn} (Y. Hu)}
\date{2021/8/14}

\begin{abstract}
Let $G=(V,E)$ be a connected finite graph.
We study the relativistic non-Abelian Chern-Simons-Higgs vortex equations on the graph $G$. We establish an existence result to the relativistic non-Abelian Chern-Simons-Higgs vortex equations.
\end{abstract}
%\vspace*{0.2cm}
\keywords{ Chern-Simons-Higgs vortex equations, finite graph, equation on graphs}

\maketitle
%\vspace*{0.5 cm}

\section{Introduction}	
Vortices play significant parts in many fields of theoretical physics including superconductivity theory, cosmology, condensed-matter physics,   electroweak theory, and quantum Hall effect. A tide of research related to vortex equations has been accomplished; see, for example, \cite{CI, T, TY, WY, Y} and the references therein. Recently, Han, Lin and Yang \cite{HLY2} investigated a system of relativistic non-Abelian Chern-Simons-Higgs vortex equations whose Cartan matrix $K$ is that of arbitrary simple Lie algebra, they established a general existence result for the doubly periodic solutions of the Chern-Simons-Higgs vortex equations.
 
In recent years, equations on graphs have attracted extensive attention; see, for example, \cite{ ALY, Hu, Hub, HWY, HLY, LP} and the references therein. 

 Recently, Huang, Lin and Yau \cite{HLY} proved the existence of solutions to mean field equations 
$$
\Delta u+e^{u}=\rho \delta_{0}
$$
and
$$
\Delta u=\lambda e^{u}\left(e^{u}-1\right)+4 \pi \sum_{j=1}^{M} \delta_{p_{j}}
$$
on graphs.

Let $G=(V,E)$ is a connected finite graph, and $V$ denotes the vetex set and $E$ denotes the edge set.

Inspired by the work of Huang-Lin-Yau \cite{HLY}, we study the relativistic Chern-Simons-Higgs equations  
\begin{equation}\label{1}
	\Delta u_{i}=\lambda\left(\sum_{j=1}^{n} \sum_{k=1}^{n} K_{k j} K_{j i} \mathrm{e}^{u_{j}} \mathrm{e}^{u_{k}}-\sum_{j=1}^{n} K_{j i} \mathrm{e}^{u_{j}}\right)+4 \pi \sum_{j=1}^{N_{i}} \delta_{p_{i j}}(x), \quad i=1, \ldots, n,
\end{equation}
on $G$, where $K=(K_{ij})$ is the Cartan matrix of a finite dimensional semisimple Lie algebra $L$, $n\ge1$ is the rank of $L$ which is the dimension of the Cartan subalgebra of $L$, $p_{ij}$, $j=1,...,N_{i}$, $i=1,...,n$, are arbitrarily chosen distinct vertices on the graph, and $\delta_{p_{ij}}$ is the Dirac mass at $p_{ij}$. With a view to handling the system in a unified framework, we need some suitable assumption on the matrix K. We suppose that
\begin{equation}\label{41}
	K^{T}=PS,
\end{equation}
$P$ is a diagonal matrix satisfying 
\begin{equation}\label{33}
	P:=diag\{P_1,...,P_n\},~P_{i}>0,~i=1,...,n,
\end{equation}
$S$ is a positive definite matrix of the form
\begin{equation}\label{43}
	S \equiv\left(\begin{array}{cccccc}
		\alpha_{11} & -\alpha_{12} & \cdots & \cdots & \cdots & -\alpha_{1 n} \\
		\vdots & \vdots & \vdots & \vdots & \vdots & \vdots \\
		-\alpha_{i 1} & -\alpha_{i 2} & \cdots & \alpha_{i i} & \cdots & -\alpha_{i n} \\
		\vdots & \vdots & \vdots & \vdots & \vdots & \vdots \\
		-\alpha_{n 1} & -\alpha_{n 2} & \cdots & \cdots & -\alpha_{n n-1} & \alpha_{n n}
	\end{array}\right),
\end{equation}
\begin{equation}\label{44}
	\alpha_{i i}>0, \quad i=1, \ldots, n, \quad \alpha_{i j}=\alpha_{j i} \geq 0, i \neq j=1, \ldots, n,
\end{equation}
and
\begin{equation}\label{45}
	\text{~all~the~entries~of~}S^{-1} \text{are positive}.
\end{equation}
By \eqref{45}, we conclude that 
\begin{equation}\label{46}
	R_{i} := \sum_{j=1}^{n}\left(\left(K^{\tau}\right)^{-1}\right)_{i j}>0
	\end{equation}
for $i=1,...,n$. 

 We now state our main results as follows.
 
 \begin{Theorem}\label{31}
 	Assume that the matrix $K$ satisfies \eqref{41}-\eqref{45}, then we have the following conclusions: 
 		
 	$(\mathrm{i})$~Suppose that \eqref{1} has a solution. Then we have 
 	\begin{equation}\label{47}
 		\lambda>\lambda_{0} \equiv \frac{16 \pi}{|V|} \frac{\sum \limits_{i=1}^{n} \sum\limits_{j=1}^{n} P_{i}^{-1}\left(K^{-1}\right)_{j i} N_{j}}{\sum\limits_{i=1}^{n} \sum\limits_{j=1}^{n} P_{i}^{-1}\left(K^{-1}\right)_{j i}}.
 	\end{equation}
 
 	$(\mathrm{ii})$~There exists a constant $\lambda_{1}>\lambda_{0}$ so that if $\lambda>\lambda_{1}$,  then \eqref{1} admits a solution $(u^{\lambda}_{1},...,u^{\lambda}_{n})$. 
 \end{Theorem}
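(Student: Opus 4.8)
The plan is to solve \eqref{1} by a variational argument, patterned on the scalar relativistic Chern--Simons equation on graphs in \cite{HLY} and the non-Abelian doubly periodic problem in \cite{HLY2}. First I would remove the Dirac masses: for each $i$ pick $u_i^{0}$ on $V$ with $\Delta u_i^{0}=4\pi\sum_{j=1}^{N_i}\delta_{p_{ij}}-\tfrac{4\pi N_i}{|V|}$ and $\int_V u_i^{0}=0$ (solvable on the connected finite graph, the right-hand side summing to $0$), and set $u_i=u_i^{0}+v_i$; then \eqref{1} becomes the regular system $\Delta v_i=\lambda R_i(u^{0}+v)+\tfrac{4\pi N_i}{|V|}$, where $R_i(w):=\sum_{j,k}K_{kj}K_{ji}e^{w_j}e^{w_k}-\sum_j K_{ji}e^{w_j}$. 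The structural point, using $K^{T}=PS$, is that with $M:=PSP$ (symmetric and, by \eqref{43}--\eqref{44}, positive definite) and
\[
F(w):=\tfrac12\sum_{j,k}S_{jk}e^{w_j}e^{w_k}-\sum_j P_j^{-1}e^{w_j}+\tfrac12\sum_j P_j^{-1}R_j
\]
one has $R(w)=M\,\nabla_w F(w)$, and $F\ge 0$ with $F=0$ exactly at the vacuum $w^{*}=(\ln R_1,\dots,\ln R_n)$, which is a genuine point of $\R^{n}$ by \eqref{45}--\eqref{46}. Multiplying the system by $M^{-1}$, its solutions are precisely the critical points of
\[
J_\lambda(v)=\tfrac12\sum_{i,j}(M^{-1})_{ij}\int_V\nabla v_i\cdot\nabla v_j+\lambda\int_V F(u^{0}+v)+4\pi\sum_i(M^{-1}N)_i\int_V v_i ,
\]
where $N:=(N_1,\dots,N_n)^{T}$; by \eqref{45}, $(M^{-1}N)_i>0$ for every $i$.

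Since $F\ge0$ and the quadratic part is positive definite modulo constants, $J_\lambda$ is bounded below on each slab $\{v:\int_V v_i\ge-C\ \forall i\}$ but not globally — the linear term sends $J_\lambda\to-\infty$ as some $\int_V v_i\to-\infty$ — so one must instead produce a local minimizer, exactly as in the scalar case. Writing $v=v'+c$ with $c\in\R^{n}$ and $\int_V v_i'=0$, I would fix $\rho>0$ large, then $c^{0}\in\R^{n}$ with each $c^{0}_i$ negative enough (depending on $\rho$ and $\|u^{0}\|_{\infty}$) that $F(u^{0}+v'+c^{0})\ge\tfrac14\sum_j P_j^{-1}R_j$ on $V$ whenever $\|v'\|_2\le\rho$, and finally $\lambda_1>\lambda_0$ large. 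On the closed set $\mathcal S=\{v'+c:\|v'\|_2\le\rho,\ c_i\ge c^{0}_i\ \forall i\}$ the functional is coercive (using positive definiteness of $S$ and $(M^{-1}N)_i>0$), hence $\inf_{\mathcal S}J_\lambda$ is attained; and for $\lambda>\lambda_1$ the values of $J_\lambda$ on $\partial\mathcal S$ exceed the $\lambda$-independent number $E_0:=J_\lambda\big((\ln R_i-u_i^{0})_i\big)$ — on a face $\{c_i=c^{0}_i\}$ because there $\lambda\int_V F\ge\tfrac{\lambda|V|}{4}\sum_j P_j^{-1}R_j\to+\infty$, and on $\{\|v'\|_2=\rho\}$ because the Dirichlet term is $\gtrsim\rho^{2}$ (spectral gap of $-\Delta$ on $G$), which for $\rho$ large exceeds $E_0$, the rest of $J_\lambda$ there being bounded below uniformly in $\lambda$ since $\int_V F\ge0$ and $c_i\ge c^{0}_i$. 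As the test point $(\ln R_i-u_i^{0})_i$ lies in the interior of $\mathcal S$ and realizes $E_0$, the infimum of $J_\lambda$ over $\mathcal S$ is attained at an interior point, i.e.\ at a free critical point of $J_\lambda$, yielding a solution $(u_1^{\lambda},\dots,u_n^{\lambda})=(u_i^{0}+v_i)_i$ of \eqref{1}. Part (i) already forces $\lambda_1\ge\lambda_0$, so $\lambda_1>\lambda_0$ can be arranged.

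The hard part is this last step: the non-coercivity of $J_\lambda$ is genuine, so the whole argument hinges on choosing $\mathcal S$ so that $J_\lambda$ beats a fixed value on $\partial\mathcal S$ while equalling it at the vacuum test point — the delicate phenomenon already present for the scalar Chern--Simons equation. The non-Abelian coupling adds bookkeeping: positive definiteness of $S$ (hence of $M$), and positivity of the $R_i$ and of the $(M^{-1}N)_i$, are used throughout the analysis of the constant modes, so \eqref{41}--\eqref{45} enter essentially. A preliminary item to check carefully is the identity $R(w)=M\,\nabla_w F(w)$ together with positive definiteness of $M$, which is where \eqref{41} is used.
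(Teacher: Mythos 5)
Your reduction and your variational framework coincide with the paper's: your $M=PSP$ is the inverse of the paper's matrix $A=P^{-1}S^{-1}P^{-1}$, your $F$ is (after the translation $u_i\mapsto u_i+\ln R_i$) the paper's potential $\tfrac12(\mathbf{U}-\mathbf{1})^{T}Q(\mathbf{U}-\mathbf{1})$, and your $J_\lambda$ is the functional $I$ of \eqref{19}. Where you genuinely diverge is in how a critical point is produced. The paper follows the constrained-minimization scheme of Han--Lin--Yang: it imposes the integrated equation \eqref{515} as a constraint, solves the resulting quadratic system \eqref{68} for the constant parts $\mathbf{c}(\mathbf{w})$ by a Brouwer-degree homotopy on the admissible set $\mathscr{A}$ of \eqref{A}, minimizes the reduced functional $J(\mathbf{w})=I(\mathbf{w}+\mathbf{c}(\mathbf{w}))$, and shows the minimizer is interior to $\mathscr{A}$ by comparing the boundary values of Lemma \ref{u8} with a test configuration built from the scalar solutions of Theorem \ref{ly}. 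Your direct local minimization over the box $\mathcal{S}$ bypasses all of that machinery, and on a finite graph it can be made to work precisely because $\|v'\|_{\infty}$ is controlled by $\|v'\|_{2}$ (the function space is finite dimensional), which is what fails in the continuum and forces the constrained approach there. One step does need repair: your lower bound for $F$ is justified only at the corner $c=c^{0}$ of the box, yet you invoke it on an entire face $\{c_i=c_i^{0}\}$, where only the $i$-th exponential is forced to be small and the remaining components are unconstrained from above; the constant $\tfrac14\sum_j P_j^{-1}R_j$ need not bound $F$ there. The fix is the partial-minimization (Schur complement) inequality $F(\mathbf{x})=\tfrac12(\mathbf{x}-R\mathbf{1})^{T}S(\mathbf{x}-R\mathbf{1})\ge \frac{(x_i-R_i)^{2}}{2(S^{-1})_{ii}}$, which still yields a positive, $\lambda$-independent lower bound on each face, and that is all your boundary estimate requires.

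The more substantive omission is that you prove only part (ii). Part (i), the necessary condition $\lambda>\lambda_0$, is a separate assertion that you cite at the end ("Part (i) already forces $\lambda_1\ge\lambda_0$") but never establish. The paper proves it by multiplying the system by $A=M^{-1}$, integrating over $V$ so the Laplacian drops out, pairing with $\mathbf{1}^{T}$, and completing the square using $(K^{T})^{-1}\mathbf{1}=R\mathbf{1}$ together with the positive definiteness of $Q=RSR$; the resulting identity \eqref{517} forces the strict inequality \eqref{47}. Your setup (positivity of $M^{-1}\mathbf{N}$ and positive definiteness of $S$) contains everything needed for this computation, but as written your proposal establishes only half of the theorem.
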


The rest of the paper is arranged as below. In Section 2, We present some results that we will use frequently in the following pages. Section 3 and Section 4 are devoted to the proof of Theorem \ref{31}.

\section{Preliminary results}

For each edge $xy \in E$, We suppose that its weight $w_{xy}>0$ and that $w_{xy}=w_{yx}$. Set $\mu: V \to (0,+\infty)$ be a finite measure. For any function $u: V \to \mathbb{R}$, the Laplacian of $u$ is defined by 
\begin{equation}\label{l1}
	\Delta u(x)=\frac{1}{\mu(x)} \sum_{y \sim x} w_{y x}(u(y)-u(x)),
\end{equation}
where $y \sim x$ means $xy \in E$. The gradient  $\nabla$ of function $f$ is defined by a vector 
$$\nabla f (x):=\left(  \left[ f(y)-f(x)\right]  \sqrt{\frac{w_{xy}}{2\mu (x)}} \right)_{y\sim x} .$$ The gradient form of $u$ reads 
\begin{equation}
	\Gamma(u, v)(x)=\frac{1}{2 \mu(x)} \sum_{y \sim x} w_{x y}(u(y)-u(x))(v(y)-v(x)).
\end{equation}
We denote the length of the gradient of $u$ by
\begin{equation*}
	|\nabla u|(x)=\sqrt{\Gamma(u)(x)}=\left(\frac{1}{2 \mu(x)} \sum_{y \sim x} w_{x y}(u(y)-u(x))^{2}\right)^{1 / 2}.
\end{equation*}
Denote, for any function $
u: V \rightarrow \mathbb{R}
$, an integral of $u$ on $V$ by $\int \limits_{V} u d \mu=\sum\limits_{x \in V} \mu(x) u(x)$. Denote  $|V|$=$ \text{Vol}(V)=\sum \limits_{x \in V} \mu(x)$ the volume of $V$. For $p > 0$, denote $|| u ||_{p}:=||u||_{L^{p}(V)}=(\int \limits_{V} |u|^{p} d \mu)^{\frac{1}{p}}$. Define a sobolev space and a norm on it by 
\begin{equation*}
	W^{1,2}(V)=\left\{u: V \rightarrow \mathbb{R}: \int \limits_{V} \left(|\nabla u|^{2}+u^{2}\right) d \mu<+\infty\right\},
\end{equation*}
and \begin{equation*}
	\|u\|_{H^{1}(V)}=	\|u\|_{W^{1,2}(V)}=\left(\int \limits_{V}\left(|\nabla u|^{2}+u^{2}\right) d \mu\right)^{1 / 2}.
\end{equation*}

To apply the variational method, we need the following Sobolev embedding, Truding-Moser inequlity and interpolation inequality on graphs.

\begin{Lemma}\label{21}
	{\rm (\cite[Lemma 5]{ALY})} Let $G=(V,E)$ be a finite graph. The sobolev space $W^{1,2}(V)$ is precompact. Namely, if ${u_j}$ is bounded in $W^{1,2}(V)$, then there exists some $u \in W^{1,2}(V)$ such that up to a subsequence, $u_j \to u$ in $W^{1,2}(V)$.
\end{Lemma}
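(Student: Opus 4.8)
The plan is to exploit the fact that $V$ is a finite set: every function $u\colon V\to\mathbb{R}$ is specified by its finitely many values $\{u(x)\}_{x\in V}$, so $W^{1,2}(V)$ is nothing but the finite-dimensional vector space $\mathbb{R}^{|V|}$ equipped with a particular norm. The precompactness assertion is then essentially the Bolzano--Weierstrass theorem in disguise, and the whole task reduces to extracting a subsequence that converges at every vertex and then checking that vertexwise convergence upgrades to convergence in the $W^{1,2}(V)$ norm.

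First I would turn the boundedness of $\{u_j\}$ in $W^{1,2}(V)$ into a uniform pointwise bound. Set $\mu_0:=\min_{x\in V}\mu(x)$, which is strictly positive because $\mu$ takes positive values on the finite set $V$. If $\|u_j\|_{W^{1,2}(V)}\le C$ for all $j$, then in particular $\int_V u_j^2\,d\mu=\sum_{x\in V}\mu(x)u_j(x)^2\le C^2$, whence $\mu_0\sum_{x\in V}u_j(x)^2\le C^2$ and $|u_j(x)|\le C/\sqrt{\mu_0}$ for every $x\in V$ and every $j$. Thus for each fixed vertex $x$ the real sequence $\{u_j(x)\}_j$ is bounded. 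Since $V$ is finite, applying Bolzano--Weierstrass once per vertex (equivalently, a finite diagonal extraction) yields a single subsequence, still denoted $\{u_j\}$, and a function $u\colon V\to\mathbb{R}$ with $u_j(x)\to u(x)$ for every $x\in V$. The limit $u$ lies in $W^{1,2}(V)$ automatically, being an arbitrary real function on the finite set $V$.

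It then remains to show $u_j\to u$ in $W^{1,2}(V)$, and here pointwise convergence suffices because every relevant integral is a finite sum. Indeed $\int_V(u_j-u)^2\,d\mu=\sum_{x\in V}\mu(x)(u_j(x)-u(x))^2\to0$, and likewise $\int_V|\nabla(u_j-u)|^2\,d\mu=\sum_{x\in V}\frac{1}{2}\sum_{y\sim x}w_{xy}\bigl((u_j-u)(y)-(u_j-u)(x)\bigr)^2\to0$, since each summand is a continuous function of the finitely many differences $u_j(z)-u(z)$, all of which tend to $0$. Adding the two gives $\|u_j-u\|_{W^{1,2}(V)}\to0$.

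The statement carries no real analytical difficulty: on a finite graph $W^{1,2}(V)$ is finite-dimensional and all norms on it are equivalent, so compactness is automatic. The only point requiring any care is the first step, namely observing that the $L^2$ part of the norm, together with the lower bound $\mu(x)\ge\mu_0>0$, controls the values of $u_j$ at each vertex; this is precisely what permits the vertexwise Bolzano--Weierstrass extraction. Once that is in place, nothing beyond the finiteness of $V$ is needed.
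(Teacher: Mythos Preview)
Your proof is correct. The paper does not actually supply its own proof of this lemma: it is stated as a citation of \cite[Lemma~5]{ALY} and used without argument. Your approach---identifying $W^{1,2}(V)$ with $\mathbb{R}^{|V|}$ via the positivity of $\mu_0=\min_{x\in V}\mu(x)$, extracting a pointwise-convergent subsequence by Bolzano--Weierstrass, and then observing that all norms involved are finite sums of continuous functions of the vertex values---is exactly the standard justification and is in fact the argument given in the cited reference.
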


\begin{Lemma}\label{2.2}
	{\rm (\cite[Lemma 6]{ALY})}	Let $G = (V, E)$ be a finite graph. For all functions $u : V \to \mathbb{R}$ with $\int \limits_{V} u d\mu = 0$, there 
	exists some constant $C$ depending only on $G$ such that $\int \limits_{V} u^2 d\mu \le C \int \limits_{V} |\nabla u|^2 d\mu$.
\end{Lemma}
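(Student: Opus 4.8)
\medskip
\noindent\textbf{Proof plan.}
I would prove this spectrally. Since $V$ is finite, $W^{1,2}(V)$ is simply the finite-dimensional vector space of all functions $V\to\R$, which I equip with the inner product $\langle u,v\rangle_{\mu}=\int_V uv\,d\mu$. The first step is the summation-by-parts identity $\int_V\Gamma(u,v)\,d\mu=-\int_V v\,\Delta u\,d\mu$, obtained by expanding the definitions of $\Delta$ and $\Gamma$, rewriting the resulting double sum as a sum over oriented edges, and symmetrising in $x\leftrightarrow y$ (this is where $w_{xy}=w_{yx}$ and the factor $\tfrac12$ in $\Gamma$ enter). In particular $\int_V|\nabla u|^2\,d\mu=-\int_V u\,\Delta u\,d\mu\ge 0$, so $-\Delta$ is self-adjoint and positive semidefinite on $(W^{1,2}(V),\langle\cdot,\cdot\rangle_{\mu})$.

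The second step is to pin down $\ker(-\Delta)$: if $\Delta u=0$, the identity above forces $\int_V|\nabla u|^2\,d\mu=0$, hence $u(y)=u(x)$ for every edge $y\sim x$, and since $G$ is connected $u$ must be constant; thus $\ker(-\Delta)=\R\cdot\mathbf 1$. Therefore the hypothesis $\int_V u\,d\mu=0$ is precisely the statement that $u$ is $\langle\cdot,\cdot\rangle_{\mu}$-orthogonal to $\ker(-\Delta)$. Letting $\lambda_1(G)>0$ denote the smallest positive eigenvalue of $-\Delta$ (strictly positive exactly because the kernel is the constants, and depending only on $G$, its edge weights, and the measure $\mu$), the spectral theorem yields, for every such $u$, $\int_V|\nabla u|^2\,d\mu=\langle(-\Delta)u,u\rangle_{\mu}\ge\lambda_1(G)\,\|u\|_{L^2(V)}^2$, i.e.\ the assertion with $C=1/\lambda_1(G)$.

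If one prefers to avoid naming an eigenvalue, the same conclusion follows by contradiction and compactness: otherwise there are $u_k$ with $\int_V u_k\,d\mu=0$, $\int_V u_k^2\,d\mu=1$ and $\int_V|\nabla u_k|^2\,d\mu\to0$; a subsequence converges (by finite-dimensionality, or by Lemma \ref{21}) to some $u$ with $\int_V u\,d\mu=0$, $\int_V u^2\,d\mu=1$ and $\int_V|\nabla u|^2\,d\mu=0$, forcing $u$ constant and hence $u\equiv0$, a contradiction. I do not anticipate a genuine obstacle here; the only care required is in the summation-by-parts bookkeeping and in invoking connectedness of $G$ to identify $\ker\Delta$.
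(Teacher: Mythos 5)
Your proof is correct, and both routes you sketch (spectral gap, or compactness by contradiction) work. One thing to be aware of: the paper does not prove this lemma at all---it is imported verbatim as Lemma~6 of \cite{ALY}---so there is no in-paper argument to compare against; the proof in that reference is essentially your fallback argument (assume the inequality fails, normalise $\int_V u_k^2\,d\mu=1$ with $\int_V u_k\,d\mu=0$ and $\int_V|\nabla u_k|^2\,d\mu\to0$, extract a convergent subsequence by finite-dimensionality, and use connectedness to force the limit to be a nonzero constant of zero mean, a contradiction). Your primary spectral formulation is a clean repackaging of the same ingredients---the summation-by-parts identity, self-adjointness of $-\Delta$ with respect to $\langle\cdot,\cdot\rangle_\mu$, and the identification $\ker(-\Delta)=\R\cdot\mathbf 1$---and it buys you the explicit constant $C=1/\lambda_1(G)$, which the compactness route does not produce. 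The only point worth flagging is that the lemma as stated says merely ``finite graph'', but connectedness is indispensable: on a disconnected graph a function that is constant on each component with zero total mean violates the inequality. The paper's standing assumption that $G$ is connected covers this, and you correctly invoke it at the step where you identify the kernel of $\Delta$.
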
 

\begin{Lemma}\label{mt}
	{\rm (\cite[Lemma 7]{ALY})}
	Let $G=(V,E)$ be a finite graph. For any $\beta\in \mathbb{R}$, there exists a constant $C$ depending only on $\beta$ and $G$ such that for all functions $v$ with $\int\limits_{V} |\nabla v|^{2} d \mu\le 1$  and $\int\limits_{V} v d \mu =0$, there holds 
	\begin{equation}
		\int\limits_{V} e^{\beta v^{2}} d\mu \le C.
	\end{equation}
\end{Lemma}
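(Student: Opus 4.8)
The plan is to exploit the fact that on a \emph{finite} graph the underlying function space is finite-dimensional, so that the Trudinger--Moser inequality collapses to an elementary pointwise estimate. In sharp contrast to the Euclidean case, there is no critical-exponent threshold and no restriction on $\beta$: the inequality holds for all real $\beta$, and the whole difficulty of the continuous theory (concentration phenomena, optimal constants dictated by dimension) simply disappears because the measure $\mu$ is bounded below by a positive constant.

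First I would use the Poincar\'e inequality of Lemma \ref{2.2} to control the $L^{2}$ norm of $v$. Since the hypotheses give $\int_{V} v \, d\mu = 0$ and $\int_{V} |\nabla v|^{2} d\mu \le 1$, Lemma \ref{2.2} produces a constant $C_{0} = C_{0}(G)$ with
\begin{equation*}
	\int_{V} v^{2} \, d\mu \le C_{0} \int_{V} |\nabla v|^{2} \, d\mu \le C_{0}.
\end{equation*}
Second I would upgrade this $L^{2}$ bound to a uniform pointwise bound, which is the step where finiteness of $G$ is used. Set $\mu_{\min} := \min_{x \in V} \mu(x)$, which is strictly positive because $V$ is finite and $\mu$ takes values in $(0, +\infty)$. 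For every vertex $x \in V$ one has $\mu(x) v(x)^{2} \le \sum_{y \in V} \mu(y) v(y)^{2} = \int_{V} v^{2} d\mu \le C_{0}$, hence
\begin{equation*}
	v(x)^{2} \le \frac{C_{0}}{\mu_{\min}} \qquad \text{for all } x \in V.
\end{equation*}

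Finally I would estimate the exponential integral directly from this bound. If $\beta \le 0$ then $e^{\beta v^{2}} \le 1$ pointwise, so $\int_{V} e^{\beta v^{2}} d\mu \le |V|$. If $\beta > 0$ then the pointwise bound yields $e^{\beta v(x)^{2}} \le e^{\beta C_{0}/\mu_{\min}}$ at each vertex, whence
\begin{equation*}
	\int_{V} e^{\beta v^{2}} \, d\mu = \sum_{x \in V} \mu(x) \, e^{\beta v(x)^{2}} \le |V| \, e^{\beta C_{0}/\mu_{\min}}.
\end{equation*}
Choosing $C := |V| \max\{1, e^{\beta C_{0}/\mu_{\min}}\}$, which depends only on $\beta$ and $G$, finishes the argument.

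Honestly, there is no serious analytic obstacle: the single point requiring any care is recognizing that the positivity and finiteness of $\mu$ convert an integral ($L^{2}$) bound into an $L^{\infty}$ bound, after which the boundedness of the exponential is automatic. The value of stating the lemma in this Trudinger--Moser form is purely for later use in the variational scheme, where one needs the exponential nonlinearities $e^{u_{j}}$ to be controlled along minimizing sequences; the inequality guarantees this control uniformly once the Dirichlet energy is normalized.
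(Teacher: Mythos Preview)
Your argument is correct: on a finite graph the Poincar\'e inequality of Lemma~\ref{2.2} combined with the positive lower bound $\mu_{\min}$ on the vertex measure immediately converts the $L^{2}$ bound into an $L^{\infty}$ bound, from which the exponential integral is trivially estimated. There is nothing to compare against here, since the paper does not supply its own proof of this lemma---it is quoted verbatim as \cite[Lemma~7]{ALY} and used as a black box; your self-contained argument is in fact the same elementary mechanism underlying the proof in \cite{ALY}.
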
 

\begin{Lemma}\label{i}
	{\rm (Interpolation inequality for $L^{r}$-norms on graphs) }. Suppose that $\theta \in (0,1) $, $0<\theta r\le s$, $0<(1-\theta)r \le t$ and $\frac{1}{r}=\frac{\theta}{s}+\frac{1-\theta}{t}$. Then we have 
	\begin{equation}
		||u||_{L^{r}(V)}\le ||u||^{\theta}_{L^{s}(U)} ||u||^{1-\theta}_{L^{t}(U)} .
	\end{equation}
\end{Lemma}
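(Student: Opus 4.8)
The plan is to derive this directly from Hölder's inequality on the finite measure space $(V,\mu)$, treating it purely as an exponent-bookkeeping exercise. First I would record the elementary splitting of the integrand: for the exponent $r$ one writes $|u|^{r}=|u|^{\theta r}\,|u|^{(1-\theta)r}$, so that
\begin{equation*}
\|u\|_{L^{r}(V)}^{r}=\int\limits_{V}|u|^{\theta r}\,|u|^{(1-\theta)r}\,d\mu .
\end{equation*}

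Next I would apply Hölder's inequality to this product with the conjugate pair $p:=\frac{s}{\theta r}$ and $q:=\frac{t}{(1-\theta)r}$. The hypotheses $0<\theta r\le s$ and $0<(1-\theta)r\le t$ guarantee $p\ge 1$ and $q\ge 1$, so these are admissible Hölder exponents, and the defining relation $\frac1r=\frac{\theta}{s}+\frac{1-\theta}{t}$ is precisely the conjugacy identity
\begin{equation*}
\frac1p+\frac1q=\frac{\theta r}{s}+\frac{(1-\theta)r}{t}=r\Big(\frac{\theta}{s}+\frac{1-\theta}{t}\Big)=1 .
\end{equation*}
Hölder then yields
\begin{equation*}
\int\limits_{V}|u|^{\theta r}\,|u|^{(1-\theta)r}\,d\mu\le\Big(\int\limits_{V}|u|^{\theta r\cdot p}\,d\mu\Big)^{1/p}\Big(\int\limits_{V}|u|^{(1-\theta)r\cdot q}\,d\mu\Big)^{1/q},
\end{equation*}
and the choice of $p,q$ makes the inner exponents collapse to $\theta r\cdot p=s$ and $(1-\theta)r\cdot q=t$.

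Substituting these back and rewriting the two factors in terms of the $L^{s}$ and $L^{t}$ norms gives $\|u\|_{L^{r}(V)}^{r}\le\|u\|_{L^{s}(V)}^{\theta r}\,\|u\|_{L^{t}(V)}^{(1-\theta)r}$; taking $r$-th roots then produces the stated inequality. I expect no genuine analytic obstacle here, since on a finite graph $(V,\mu)$ is a finite measure space and every norm is a finite sum, so the only care needed is the exponent arithmetic just carried out, together with the observation that under the stated constraints with finite $s,t$ the equality $\theta r=s$ would force $t=\infty$; hence the interesting range is $p,q\in(1,\infty)$, with any limiting case handled by the usual $L^{1}$--$L^{\infty}$ form of Hölder. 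I would also note that the symbol $U$ appearing on the right-hand side of the displayed inequality in the statement should read $V$, the only measure space in play.
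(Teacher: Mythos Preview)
Your proof is correct and follows essentially the same route as the paper: split $|u|^{r}=|u|^{\theta r}|u|^{(1-\theta)r}$ and apply H\"older with exponents $p=s/(\theta r)$ and $q=t/((1-\theta)r)$. If anything, you supply more detail than the paper does (verifying $p,q\ge 1$ and the conjugacy identity, and catching the $U$ versus $V$ typo).
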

\begin{proof}
By H$\ddot{\text{o}}$lder inequality, we see that  
	\begin{equation}
		\begin{aligned}
			\int\limits_{V}|u|^{r} d \mu &=\int\limits_{V}|u|^{\theta r}|u|^{(1-\theta) r} d \mu \\
			& \leq\left(\int\limits_{V}|u|^{\theta r \frac{s}{\theta r}} d \mu \right)^{\frac{\theta r}{s}}\left(\int\limits_{V}|u|^{(1-\theta) r \frac{t}{(1-\theta) r}} d \mu \right)^{\frac{(1-\theta) r}{t}} .
		\end{aligned}
	\end{equation}
\end{proof}

In order to establish Theorem \ref{31}, we need the following result due to Huang-Lin-Yau.
\begin{Theorem}\label{ly}
	{\rm (\cite[Theorem 2.2]{HLY})}
	There is a critical value $\lambda_{c}$ depending on $G$ satisfying	
	$$\lambda_{c}\ge \frac{16\pi M}{|V|},$$
	such that when $\lambda>\lambda_{c}$, the equation
	\begin{equation}\label{13a}
		\Delta u=\lambda e^{u}\left(e^{u}-1\right)+4 \pi \sum_{j=1}^{M} \delta_{p_{j}},~x\in G,
	\end{equation}
	has a solution $u_{\lambda}$ on $G$, and when $\lambda<\lambda_{c}$, the equation \eqref{13a} has no solution, where $M>0$ is an integer.
\end{Theorem}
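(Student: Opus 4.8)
The plan is to exploit that $G$ is finite, so \eqref{13a} is really a system on $\R^{|V|}$, and to combine three ingredients: an integral necessary condition, a discrete maximum principle, and the method of ordered sub- and super-solutions. I first record the necessary condition. Integrating \eqref{13a} over $V$ and using $\int_V\Delta u\,d\mu=0$ together with $\int_V\delta_{p_j}\,d\mu=1$, every solution satisfies $\lambda\int_V e^{u}(1-e^{u})\,d\mu=4\pi M$. Since $t(1-t)\le \tfrac14$ for all real $t$, the integrand is pointwise at most $\tfrac14$, whence $4\pi M\le \tfrac{\lambda}{4}|V|$; that is, \eqref{13a} can be solvable only if $\lambda\ge 16\pi M/|V|$. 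I then set $\lambda_{c}:=\inf\{\lambda>0:\eqref{13a}\text{ is solvable}\}$, so that this bound gives $\lambda_{c}\ge 16\pi M/|V|$ once the solvable set is shown to be a half-line.

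Next I reduce to a regular equation. Let $\phi_{0}$ solve $\Delta\phi_{0}=4\pi\sum_{j=1}^{M}\delta_{p_{j}}-\tfrac{4\pi M}{|V|}$, which is solvable because the right-hand side has zero average, and put $h:=e^{\phi_{0}}>0$. Writing $u=\phi_{0}+v$ turns \eqref{13a} into $\Delta v=\lambda h^{2}e^{2v}-\lambda h e^{v}+\tfrac{4\pi M}{|V|}=:N_{\lambda}(v)$, with $v$ now a genuine function on $V$. Any large constant $c$ is a super-solution, since $\Delta c=0\le N_{\lambda}(c)$ for $c$ large. The crucial point for existence at large $\lambda$ is the explicit sub-solution $v_{-}:=-\log 2-\phi_{0}$: it pins $h e^{v_{-}}\equiv\tfrac12$ (the maximizer of $t(1-t)$), so $N_{\lambda}(v_{-})\equiv-\tfrac{\lambda}{4}+\tfrac{4\pi M}{|V|}$, while $\Delta v_{-}=-\Delta\phi_{0}$ equals $\tfrac{4\pi M}{|V|}$ off the $p_{j}$ and $\tfrac{4\pi M}{|V|}-\tfrac{4\pi}{\mu(p_{j})}$ at $p_{j}$. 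The inequality $\Delta v_{-}\ge N_{\lambda}(v_{-})$ then holds at every vertex as soon as $\lambda\ge\max_{j} 16\pi/\mu(p_{j})$. With $c\ge\max_{V} v_{-}$ large, the ordered pair $v_{-}\le v_{+}=c$ yields a solution, so the solvable set is nonempty and $\lambda_{c}<\infty$.

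Finally I prove monotonicity and assemble the theorem. A discrete maximum principle shows every solution obeys $u\le 0$: at a vertex $x_{0}$ maximizing $u$ one has $\Delta u(x_{0})\le 0$, and evaluating \eqref{13a} there forces $e^{u(x_{0})}\le 1$ (at $x_{0}=p_{j}$ the Dirac term only reinforces this). Hence $h e^{v}=e^{u}\le 1$, so if $v_{0}$ solves the reduced equation at $\lambda_{0}$ then for $\lambda>\lambda_{0}$ we have $\Delta v_{0}-N_{\lambda}(v_{0})=(\lambda_{0}-\lambda)\,h e^{v_{0}}(h e^{v_{0}}-1)\ge 0$, i.e. $v_{0}$ is a sub-solution at level $\lambda$; pairing it with a large constant super-solution produces a solution at $\lambda$. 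Thus $\{\lambda:\eqref{13a}\text{ solvable}\}$ is a half-line $(\lambda_{c},\infty)$ (or $[\lambda_{c},\infty)$), so \eqref{13a} has no solution for $\lambda<\lambda_{c}$, and with the first paragraph $\lambda_{c}\ge 16\pi M/|V|$. The sub/super-solution step is made rigorous by rewriting the reduced equation as $(-\Delta+K)v=Kv-N_{\lambda}(v)$ with $K>\sup|N_{\lambda}'|$ on the compact range $[\min_{V} v_{-},c]$, so that $v\mapsto Kv-N_{\lambda}(v)$ is nondecreasing there and $(-\Delta+K)^{-1}$ is positivity-preserving, since $-\Delta+K$ is strictly diagonally dominant with positive diagonal and nonpositive off-diagonal entries, hence a nonsingular $M$-matrix on the finite graph; the iteration started at $c$ decreases monotonically to a solution in $[v_{-},c]$. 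I expect the main obstacle to be precisely the non-monotonicity of $N_{\lambda}$, stemming from the sign change of $t\mapsto t^{2}-t$: it is controlled simultaneously by the $K$-shift and by the a priori bound $u\le 0$, and the other delicate point, exhibiting a sub-solution valid for all large $\lambda$, is handled by the choice $v_{-}=-\log 2-\phi_{0}$.
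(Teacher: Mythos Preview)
The paper does not supply its own proof of this statement: Theorem~\ref{ly} is quoted verbatim from \cite[Theorem~2.2]{HLY} and is used as an input, so there is nothing to compare your argument against in this paper. That said, your reconstruction is correct and, judging from the surrounding text (the paper invokes ``the iterative scheme (4.5) in \cite{HLY}'' and ``the proof of Lemma~4.2 in \cite{HLY}'' right after stating the theorem), it follows essentially the same strategy as the original source: a sub/super-solution monotone iteration together with the integral identity $\lambda\int_V e^{u}(1-e^{u})\,d\mu=4\pi M$ for the necessary bound, and the maximum-principle fact $u\le 0$ for monotonicity in $\lambda$. Your explicit sub-solution $v_{-}=-\log 2-\phi_{0}$, pinning $he^{v_{-}}\equiv\tfrac12$, is a clean way to get existence for all large $\lambda$, and your $M$-matrix justification of the positivity of $(-\Delta+K)^{-1}$ on a finite graph is the right finite-dimensional substitute for the continuum comparison principle.
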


In fact, we may establish the following more accurate result.  
\begin{Proposition}
	% There is a critical value $\lambda_{c}$ depending on $G$ satisfying	$$\lambda_{c}\ge \frac{16\pi M}{|V|},$$ such that when $\lambda>\lambda_{c}$, the equation \eqref{13a}	has a maximal solution $u_{\lambda}=v_{\lambda}$ on $G$, and when $\lambda<\lambda_{c}$, the equation \eqref{13a} has no solution.
	The solution $u_{\lambda}$ obtained in Theorem \ref{ly} is maximal in the sense that if $u$ is any other solution of \eqref{1}, then 
	\begin{equation}
		u\le u_{\lambda}.
	\end{equation} 
\end{Proposition}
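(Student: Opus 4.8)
The plan is to exploit the way $u_{\lambda}$ is produced in Theorem \ref{ly}: following Huang--Lin--Yau, it arises from a monotone iteration. Recall that $u_{0}\equiv 0$ is a supersolution of \eqref{13a}, since $\Delta 0=0\le 4\pi\sum_{j=1}^{M}\delta_{p_{j}}=\lambda e^{0}(e^{0}-1)+4\pi\sum_{j=1}^{M}\delta_{p_{j}}$; that for every constant $K>\lambda$ the function $\psi(t):=\lambda e^{t}(e^{t}-1)-Kt$ is strictly decreasing on $(-\infty,0]$, because $\psi'(t)=\lambda(2e^{2t}-e^{t})-K\le\lambda-K<0$ there; and that the iterates $u_{k+1}$, defined as the unique solutions of the linear equations $(\Delta-K)u_{k+1}=\psi(u_{k})+4\pi\sum_{j=1}^{M}\delta_{p_{j}}$, satisfy $0=u_{0}\ge u_{1}\ge u_{2}\ge\cdots$ and converge pointwise on $V$ to $u_{\lambda}$. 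I would take these properties (or the analogous ones of whatever equivalent monotone scheme is used in \cite{HLY}) as the starting point; in the statement the relevant equation is of course \eqref{13a}, the one appearing in Theorem \ref{ly}.

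First I would record two elementary facts. \emph{Fact 1:} every solution $u$ of \eqref{13a} satisfies $u\le 0$ on $V$. Indeed, at a vertex $x_{0}$ where $u$ attains its maximum one has $\Delta u(x_{0})\le 0$ by \eqref{l1}, while the right-hand side of \eqref{13a} at $x_{0}$ equals $\lambda e^{u(x_{0})}(e^{u(x_{0})}-1)+4\pi\sum_{j}\delta_{p_{j}}(x_{0})$; since $\lambda>0$ and the Dirac term is nonnegative, necessarily $e^{u(x_{0})}(e^{u(x_{0})}-1)\le 0$, hence $u(x_{0})\le 0$ and therefore $u\le 0$. \emph{Fact 2:} the operator $\Delta-K$ obeys a comparison principle on $G$, namely $(\Delta-K)h\ge 0$ on $V$ implies $h\le 0$; this is immediate because at a positive maximum $x_{0}$ of $h$ one would get $0\ge\Delta h(x_{0})\ge K h(x_{0})>0$.

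Next I would show by induction on $k$ that $u\le u_{k}$ for every solution $u$ of \eqref{13a}. The base case $u\le u_{0}=0$ is Fact 1. For the inductive step, write the equation for $u$ as $(\Delta-K)u=\psi(u)+4\pi\sum_{j}\delta_{p_{j}}$ and subtract the defining equation for $u_{k+1}$:
\[
(\Delta-K)(u-u_{k+1})=\psi(u)-\psi(u_{k}).
\]
By the induction hypothesis $u\le u_{k}$, and both functions take values in $(-\infty,0]$ (by Fact 1 and $u_{k}\le u_{0}=0$), so the monotonicity of $\psi$ on $(-\infty,0]$ gives $\psi(u)-\psi(u_{k})\ge 0$; Fact 2 then yields $u-u_{k+1}\le 0$. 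Letting $k\to\infty$ in $u\le u_{k}$ gives $u\le u_{\lambda}$, as claimed.

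The main obstacle I anticipate is not any single estimate but making the argument compatible with the precise construction in \cite{HLY}: one needs the initial supersolution in that construction to be $\ge 0$ (so that it already dominates every solution, by Fact 1), and one needs the iteration to be monotone with respect to a constant $K$ large enough that $\psi$ is decreasing on the range $(-\infty,0]$ in which all iterates and all solutions lie. Once the scheme is phrased so that these two compatibility requirements hold, the induction above is routine. A secondary, but worth-stating, point is the discrete maximum/comparison principle for $\Delta$ and $\Delta-K$ on the finite graph, which follows directly from the definition \eqref{l1} of the Laplacian.
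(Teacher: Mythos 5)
Your argument is correct and is essentially the paper's own proof: the paper simply observes that any solution $u$ is a subsolution and hence lies below every iterate $v_{n}$ of the monotone scheme from Lemma 4.2 of \cite{HLY}, then passes to the limit, which is exactly the induction you carry out in detail (zero supersolution, $u\le 0$ by the discrete maximum principle, monotonicity of $\psi$ on $(-\infty,0]$ for $K>\lambda$, and the comparison principle for $\Delta-K$). You have merely made explicit the steps the paper delegates to the citation.
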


\begin{proof}
	 Suppose that $u$ is any other solution of \eqref{13a}. Then it is clear that $u$ is a subsolution and hence 
	\begin{equation}\label{b}
		u\le v_{n}
	\end{equation} 
	as in the proof of Lemma 4.2 in \cite{HLY} for every $n\in \mathbb{N}$, where $v_{n}$ is defined by iterative scheme (4.5) in \cite{HLY}. Letting $n\to +\infty$ in \eqref{b}, we deduce that $u\le u_{\lambda}$ and hence that $u_{\lambda}$ is a maximal solution of \eqref{13a}. 
	
	We now complete the proof.
\end{proof}

Furthermore, we have the following propositions.

\begin{Proposition}
	If $\lambda_{1}>\lambda_{2}>\lambda_{c}$, then $u_{\lambda_{1}}\ge u_{\lambda_{2}}$.
\end{Proposition}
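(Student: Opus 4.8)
The plan is to show that, for $\lambda_1>\lambda_2>\lambda_c$, the maximal solution $u_{\lambda_2}$ is a subsolution of \eqref{13a} with the larger coupling $\lambda_1$, and then to conclude by the same monotone-iteration argument used in the proof of the preceding Proposition. Since $u_{\lambda_1}$ is the maximal solution for $\lambda_1$, it must then dominate the subsolution $u_{\lambda_2}$.

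First I would record that every solution $u_\lambda$ of \eqref{13a} satisfies $u_\lambda\le 0$ on $V$. This is a consequence of the discrete maximum principle: at a vertex $x_0$ where $u_\lambda$ attains its maximum one has $\Delta u_\lambda(x_0)=\mu(x_0)^{-1}\sum_{y\sim x_0}w_{yx_0}\bigl(u_\lambda(y)-u_\lambda(x_0)\bigr)\le 0$; inserting this into \eqref{13a} and using $4\pi\sum_{j=1}^{M}\delta_{p_j}\ge 0$ forces $\lambda\, e^{u_\lambda(x_0)}\bigl(e^{u_\lambda(x_0)}-1\bigr)\le 0$, hence $e^{u_\lambda(x_0)}\le 1$, i.e. $u_\lambda(x_0)\le 0$; since $x_0$ is a maximum point, $u_\lambda\le 0$ everywhere.

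Since $e^{t}(e^{t}-1)\le 0$ for $t\le 0$, the bound $u_{\lambda_2}\le 0$ gives $e^{u_{\lambda_2}}(e^{u_{\lambda_2}}-1)\le 0$ at every vertex, and therefore, because $0<\lambda_2<\lambda_1$,
\[
\Delta u_{\lambda_2}=\lambda_2\, e^{u_{\lambda_2}}\bigl(e^{u_{\lambda_2}}-1\bigr)+4\pi\sum_{j=1}^{M}\delta_{p_j}\ \ge\ \lambda_1\, e^{u_{\lambda_2}}\bigl(e^{u_{\lambda_2}}-1\bigr)+4\pi\sum_{j=1}^{M}\delta_{p_j}.
\]
Hence $u_{\lambda_2}$ is a subsolution of \eqref{13a} with parameter $\lambda_1$. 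Arguing exactly as in the proof of the previous Proposition (Lemma 4.2 of \cite{HLY}), this subsolution lies below every term $v_n$ of the iterative scheme (4.5) of \cite{HLY} associated with $\lambda_1$; letting $n\to\infty$ and using $v_n\to u_{\lambda_1}$ yields $u_{\lambda_2}\le u_{\lambda_1}$.

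The step that needs the most care is this last one: one must be sure that the comparison ``subsolution $\le v_n$'' from \cite{HLY} genuinely applies to $u_{\lambda_2}$, i.e. that $u_{\lambda_2}$ lies below the initial supersolution of the $\lambda_1$-scheme — which is immediate once that initial function is taken to be a sufficiently large constant $c$, a supersolution of \eqref{13a} for $\lambda_1$ since $e^{c}(e^{c}-1)>0$ for $c$ large. Everything else is either the discrete maximum principle or a verbatim repetition of the monotone-iteration argument already carried out in the excerpt.
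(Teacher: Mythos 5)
Your proposal is correct and follows essentially the same route as the paper: establish $u_{\lambda_2}\le 0$, deduce that $u_{\lambda_2}$ is a subsolution of \eqref{13a} with parameter $\lambda_1$, and conclude by the sub-supersolution iteration together with the maximality of $u_{\lambda_1}$. The only difference is that you prove the non-positivity of $u_{\lambda_2}$ directly via the discrete maximum principle, whereas the paper simply cites Lemma 4.4 of \cite{HLY} for this fact.
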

\begin{proof}
	By Lemma 4.4 of \cite{HLY}, we deduce that $u_{\lambda_{2}}<0$, and hence that 
	\begin{equation}
		\begin{aligned}
			\Delta u_{\lambda_{2}}&=\lambda_{2} e^{u_{\lambda_{2}}}(e^{u_{\lambda_{2}}}-1)+4\pi\sum_{j=1}^{M} \delta_{p_{j}} \\
			& >\lambda_{1} e^{u_{\lambda_{2}}}(e^{u_{\lambda_{2}}}-1) +4\pi\sum_{j=1}^{M} \delta_{p_{j}}.
		\end{aligned}
	\end{equation}
Thus $u_{\lambda_{2}}$ is a subsolution of \eqref{13a} with $\lambda=\lambda_{1}$. By the sub-supersolution argument as in the proof of Lemma 4.2 in \cite{HLY}, and the maximality of $u_{\lambda_{1}}$, this implies that $u_{\lambda_{2}} \le u_{\lambda_{1}}$.
\end{proof}

\begin{Proposition}\label{f2}
%	Let $u_{0}$ be the unique solution of	\begin{equation}
% \left\{\begin{array}{l}
%			\Delta u_{0}=-\frac{4 \pi N}{|V|}+4 \pi \sum_{j=1}^{M} \delta_{p_{j}} \text { on } G \\
%			\int\limits_{V} u_{0} d\mu=0
	%	\end{array}\right.
%	\end{equation}
	 Let $u_{\lambda}$ be the maximal solution of \eqref{13a} for $\lambda>\lambda_{c}$. We have 
	\begin{equation}
		u_{\lambda} \to 0\text{~as~}\lambda\to +\infty\text{~uniformly~on~}V.
	\end{equation}
\end{Proposition}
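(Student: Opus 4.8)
The plan is to prove the uniform convergence $u_\lambda \to 0$ by combining the sign information $u_\lambda < 0$ (from Lemma 4.4 of \cite{HLY}) with an $L^1$ or energy estimate that forces $u_\lambda$ to be small, and then upgrading to a uniform bound using the finiteness of the graph. First I would integrate equation \eqref{13a} over $V$. Since $\int_V \Delta u_\lambda\, d\mu = 0$ on a finite graph, this yields
\begin{equation}\label{p1}
	\lambda \int_V e^{u_\lambda}\bigl(1 - e^{u_\lambda}\bigr)\, d\mu = 4\pi M,
\end{equation}
so that $\int_V e^{u_\lambda}(1 - e^{u_\lambda})\, d\mu = 4\pi M/\lambda \to 0$ as $\lambda \to +\infty$. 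Because $u_\lambda < 0$, the integrand $e^{u_\lambda}(1-e^{u_\lambda})$ is pointwise nonnegative, and each summand $\mu(x)\, e^{u_\lambda(x)}\bigl(1 - e^{u_\lambda(x)}\bigr)$ tends to $0$. Since $\mu(x) > 0$ is fixed, this gives $e^{u_\lambda(x)}\bigl(1 - e^{u_\lambda(x)}\bigr) \to 0$ for each $x \in V$; as $t \mapsto e^t(1-e^t)$ is zero on $(-\infty,0]$ only at $t = 0$ and is bounded away from $0$ outside any neighborhood of $0$, we conclude $u_\lambda(x) \to 0$ for each fixed vertex $x$.

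Next I would make this convergence uniform. Since $V$ is finite, pointwise convergence on $V$ is automatically uniform — there is nothing further to do once the pointwise statement is established. Concretely, given $\ve > 0$, for each $x \in V$ there is $\Lambda_x$ with $|u_\lambda(x)| < \ve$ for $\lambda > \Lambda_x$; taking $\Lambda = \max_{x \in V} \Lambda_x$ (a finite maximum) gives $\sup_{x\in V} |u_\lambda(x)| < \ve$ for $\lambda > \Lambda$. Thus the uniform statement follows immediately from the finiteness of the vertex set, which is the reason the graph setting makes this proposition considerably easier than its Euclidean counterpart.

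The one point requiring a little care — and the main obstacle, such as it is — is justifying that the Dirac masses do not obstruct the integration step. On a graph, $\delta_{p_j}$ should be interpreted so that $\int_V \delta_{p_j}\, d\mu = 1$, and the function $u_\lambda$ solving \eqref{13a} is genuinely defined at every vertex (there is no singularity, unlike in $\R^2$), so the integration leading to \eqref{p1} is legitimate and the left-hand side is a finite sum of nonnegative terms. I would also invoke $u_\lambda < 0$ explicitly to guarantee each term in that sum is nonnegative, so that the vanishing of the total sum forces each term to vanish; without the sign this deduction would fail. Finally, I would note monotonicity in $\lambda$ (the previous proposition) is not strictly needed for this argument, though it could be used as an alternative route: the family $\{u_\lambda\}_{\lambda > \lambda_c}$ is monotone increasing and bounded above by $0$, hence converges pointwise to some limit $u_\infty \le 0$, and passing to the limit in \eqref{p1} forces $\int_V e^{u_\infty}(1 - e^{u_\infty})\, d\mu = 0$, whence $u_\infty \equiv 0$; uniform convergence then again follows from $|V| < \infty$.
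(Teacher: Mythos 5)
Your main argument has a genuine gap at the step where you pass from $e^{u_\lambda(x)}\bigl(1-e^{u_\lambda(x)}\bigr)\to 0$ to $u_\lambda(x)\to 0$. The claim that $t\mapsto e^{t}(1-e^{t})$ ``is bounded away from $0$ outside any neighborhood of $0$'' on $(-\infty,0]$ is false: this function also tends to $0$ as $t\to-\infty$. So the vanishing of $e^{u_\lambda(x)}(1-e^{u_\lambda(x)})$ is equally compatible with $u_\lambda(x)\to-\infty$, and the identity $\int_V e^{u_\lambda}(1-e^{u_\lambda})\,d\mu=4\pi M/\lambda$ together with the sign $u_\lambda<0$ does not by itself decide between the two possibilities. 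What closes the gap is a $\lambda$-independent lower bound confining $u_\lambda(x)$ to a compact interval $[u_{\lambda_0}(x),0]$, on which $e^{t}(1-e^{t})$ vanishes only at $t=0$; and the only source of such a bound here is the monotonicity $u_\lambda\ge u_{\lambda_0}$ for $\lambda\ge\lambda_0$ from the preceding proposition. So your closing remark that monotonicity ``is not strictly needed'' is exactly backwards.

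Ironically, the ``alternative route'' you sketch in your last sentence is essentially the paper's actual proof: fix $\lambda_0>\lambda_c$, use monotonicity to get $u_{\lambda_0}\le u_\lambda\le 0$, let $\bar v(x)=\sup_{\lambda>\lambda_0}u_\lambda(x)$ (a finite limit by monotone boundedness), pass to the limit in the integral identity to get $\int_V e^{\bar v}(1-e^{\bar v})\,d\mu=0$, and conclude $\bar v\equiv 0$; uniformity is then free from $|V|<\infty$, as you correctly observe. If you promote that paragraph to the main argument and delete the pointwise argument that precedes it, the proof is correct and matches the paper.
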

\begin{proof}
Fix $\lambda_{0}>\lambda_{c}$, since $u_{\lambda}$ is monotone increasing in $\lambda$, by Lemma 4.4 of \cite{HLY}, we deduce that $u_{\lambda_{0}} \le u_{\lambda}\le 0$ and
	\begin{equation}
		\int\limits_{V} e^{u_{\lambda}} (1-e^{ u_{\lambda}}) d \mu= \frac{4\pi M}{\lambda},
	\end{equation} 
for $\lambda\ge \lambda_{0}$.
	Let $\bar{v}(x):=\sup\limits_{\lambda>\lambda_{0}} u_{\lambda} (x) $, $x\in V$. We deduce that $u_{\lambda_{0}} \le \bar{v} \le 0$ in $V$ and 
	\begin{equation}
		\int\limits_{V} e^{ \bar{v}} (1-e^{ \bar{v}}) d \mu= 0.
	\end{equation} 
It follows that $\bar{v}\equiv 0$ on $V$.

We now complete the proof.
\end{proof}

\section{The constraints}

For the sake of convience, by applying the translation
\begin{equation}\label{51}
	u_{i} \to u_{i}+ \text{ln} R_{i},~i=1,...,n,
\end{equation}
in equations \eqref{1}, we can conclude that
\begin{equation}
	\Delta u_{i}=\lambda\left(\sum_{j=1}^{n} \sum_{k=1}^{n} \tilde{K}_{j k} \tilde{K}_{i j} \mathrm{e}^{u_{j}} \mathrm{e}^{u_{k}}-\sum_{j=1}^{n} \tilde{K}_{i j} \mathrm{e}^{u_{j}}\right)+4 \pi \sum_{j=1}^{N_{i}} \delta_{p_{i j}}(x)
\end{equation}
for $i=1,...,n$.
Set $u_{i}^{0}$ be the unique solution to 
\begin{equation}
	\Delta u_{i}^{0}=4 \pi \sum_{s=1}^{N_{i}} \delta_{p_{i s}}-\frac{4 \pi N_{i}}{|V|}, \quad \int\limits_{V} u_{i}^{0} \mathrm{~d} x=0.
\end{equation}
Denote $u_{i}=u_{i}^{0}+v_{i}$, $i=1,...,n$; then $v_{i} (i=1,...,n)$  satisfy
\begin{equation}\label{59}
	\Delta v_{i}=\lambda\left(\sum_{j=1}^{n} \sum_{k=1}^{n} \tilde{K}_{j k} \tilde{K}_{i j} \mathrm{e}^{u_{j}^{0}+v_{j}} \mathrm{e}^{u_{k}^{0}+v_{k}}-\sum_{j=1}^{n} \tilde{K}_{i j} \mathrm{e}^{u_{j}^{0}+v_{j}}\right)+\frac{4 \pi N_{i}}{|V|},
\end{equation}
or
\begin{equation}\label{510}
	\Delta \mathbf{v}=\lambda \tilde{K} U \tilde{K}(\mathbf{U}-\mathbf{1})+\frac{4 \pi \mathbf{N}}{|V|},
\end{equation}
where  $\mathbf{v}=(v_1,...,v_n)^{T},$ $\mathbf{N}=(N_1,...,N_n)^{T}$, $U=diag\{e^{u_{1}^{0}+v_{1}},...,e^{u_{n}^{0}+v_{n}}\},$ $\mathbf{U}=(e^{u_{1}^{0}+v_{1}},...,e^{u_{n}^{0}+v_{n}})^{T},$
\begin{equation}
	\tilde{K}:=K^{T}R=PSR,~R:=diag\{R_1,...,R_n \}.
\end{equation}

We next establish a necessary condition for the existence of solutions of \eqref{1}, and then the conclusion (i) of Theorem \ref{31} follows.

\begin{Lemma}
	Suppose that \eqref{1} admits a solution. Then 
	\begin{equation}\label{4.7}
		\lambda>\lambda_{0} := \frac{16 \pi}{|V|} \frac{\sum \limits_{i=1}^{n} \sum\limits_{j=1}^{n} P_{i}^{-1}\left(K^{-1}\right)_{j i} N_{j}}{\sum\limits_{i=1}^{n} \sum\limits_{j=1}^{n} P_{i}^{-1}\left(K^{-1}\right)_{j i}}.
	\end{equation} 
\end{Lemma}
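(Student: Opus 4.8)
The plan is to obtain \eqref{4.7} by integrating the system over $V$ and collapsing it to a single scalar inequality governed by the positive definite form attached to $S$. First I would integrate the $i$-th equation of \eqref{510} over $V$: since $\int_V\Delta v_i\,d\mu=0$ and $\int_V\delta_{p_{is}}\,d\mu=1$, this gives $\lambda\int_V\tilde K U\tilde K(\mathbf U-\mathbf 1)\,d\mu=-4\pi\mathbf N$. From \eqref{41} one has $K^{-1}=P^{-1}S^{-1}$, hence $((K^{T})^{-1})_{ij}=(S^{-1})_{ij}/P_j$, so $\mathbf R:=(R_1,\dots,R_n)^{T}=S^{-1}P^{-1}\mathbf 1$ and therefore $\tilde K\mathbf 1=PSR\mathbf 1=PS\mathbf R=\mathbf 1$. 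Writing $\mathbf f:=\tilde K\mathbf U$ (so $f_j=\sum_k\tilde K_{jk}(\mathbf U)_k$), the integrand is $\tilde K U\tilde K(\mathbf U-\mathbf 1)=\tilde K U(\mathbf f-\mathbf 1)$, so $\lambda\int_V\tilde K U(\mathbf f-\mathbf 1)\,d\mu=-4\pi\mathbf N$.

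Next I would multiply this identity on the left by the row vector $\mathbf m^{T}:=\mathbf 1^{T}P^{-1}S^{-1}P^{-1}$; equivalently, apply $\tilde K^{-1}=R^{-1}S^{-1}P^{-1}$ and then take the inner product with the weights $(R_j/P_j)_j$, since $\mathbf m^{T}\tilde K=\mathbf 1^{T}P^{-1}R$. Using $\mathbf m^{T}\mathbf N=\sum_i\sum_j P_i^{-1}(K^{-1})_{ji}N_j$, this produces
\begin{equation*}
\int_V\sum_{j=1}^{n}\frac{R_j}{P_j}(\mathbf U)_j(1-f_j)\,d\mu=\frac{4\pi}{\lambda}\sum_{i=1}^{n}\sum_{j=1}^{n}P_i^{-1}(K^{-1})_{ji}N_j.
\end{equation*}
Setting $t_j:=R_j(\mathbf U)_j$ and $b_j:=P_j^{-1}$, and using $\tilde K_{jk}=P_jS_{jk}R_k$, hence $\frac{R_j}{P_j}\tilde K_{jk}=R_jS_{jk}R_k$, the integrand equals $\sum_j b_jt_j-\sum_{j,k}S_{jk}t_jt_k=\mathbf b^{T}\mathbf t-\mathbf t^{T}S\mathbf t$ at each vertex of $V$.

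Since $S$ is positive definite by \eqref{43}, the concave map $\mathbf t\mapsto\mathbf b^{T}\mathbf t-\mathbf t^{T}S\mathbf t$ is maximized over $\R^{n}$ at $\mathbf t=\tfrac12 S^{-1}\mathbf b$ with value $\tfrac14\mathbf b^{T}S^{-1}\mathbf b$, and the same bookkeeping ($(K^{-1})_{ji}=(S^{-1})_{ji}/P_j$) gives $\mathbf b^{T}S^{-1}\mathbf b=\sum_i\sum_j P_i^{-1}(K^{-1})_{ji}$. Integrating the pointwise bound over $V$ and comparing with the displayed identity yields $\frac{4\pi}{\lambda}\sum_{i,j}P_i^{-1}(K^{-1})_{ji}N_j\le\frac{|V|}{4}\sum_{i,j}P_i^{-1}(K^{-1})_{ji}$, i.e. $\lambda\ge\lambda_0$. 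To upgrade this to the strict inequality: if $\lambda=\lambda_0$, the pointwise bound is saturated everywhere, and strict concavity forces $\mathbf t(\cdot)\equiv\tfrac12 S^{-1}\mathbf b$, so $\mathbf U$ is a constant vector; but then the right-hand side of the $i$-th translated equation is a constant function plus $4\pi\sum_s\delta_{p_{is}}$, which cannot equal $\Delta u_i\equiv 0$ once some $N_i\ge 1$. Hence $\lambda>\lambda_0$.

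I expect the only genuine obstacle to be locating the correct multiplier $\mathbf m$ and verifying the two algebraic identities $\frac{R_j}{P_j}\tilde K_{jk}=R_jS_{jk}R_k$ and $\mathbf b^{T}S^{-1}\mathbf b=\sum_{i,j}P_i^{-1}(K^{-1})_{ji}$ (with the weighted analogue producing the numerator of $\lambda_0$); once these are in place, the estimate reduces to the elementary maximization of a quadratic form with positive definite leading part, exactly as in the scalar case of Theorem \ref{ly}.
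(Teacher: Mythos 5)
Your proposal is correct and follows essentially the same route as the paper: integrate the system, apply the multiplier $\mathbf{1}^{T}P^{-1}S^{-1}P^{-1}$ (the paper first applies $A=P^{-1}S^{-1}P^{-1}$ and then pairs with $\mathbf{1}^{T}$), and bound the resulting quadratic form --- your maximization of $\mathbf{b}^{T}\mathbf{t}-\mathbf{t}^{T}S\mathbf{t}$ is, under the substitution $\mathbf{t}=R\mathbf{U}$, exactly the paper's completion of the square $\left(\mathbf{U}-\tfrac{1}{2}\mathbf{1}\right)^{T}Q\left(\mathbf{U}-\tfrac{1}{2}\mathbf{1}\right)\ge 0$ with $Q=RSR$. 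Your concluding argument for the strict inequality is actually more explicit than the paper's, which simply asserts the strict sign of the integral.
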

\begin{proof}
Denote \begin{equation}\label{57}
	A = P^{-1} S^{-1} P^{-1} \quad \text { and } \quad Q = R S R.
\end{equation}	
In view of $S$ is positive definite, we see that $A$ ang $Q$ are positive definite. By \eqref{45}, we deduce that \begin{equation}
	\mathbf{b} \equiv\left(b_{1}, \ldots, b_{n}\right)^{T} := 4 \pi A \mathbf{N}=4 \pi P^{-1} S^{-1} P^{-1} \mathbf{N}>0.
\end{equation} 
From \eqref{510}, we deduce that 
\begin{equation}
	\Delta A \mathbf{v}=\lambda U Q(\mathbf{U}-\mathbf{1})+\frac{\mathbf{b}}{|V|}.
\end{equation} 
It follows that \begin{equation}\label{515}
	\int\limits_{V} \mathrm{UQ}(\mathbf{U}-\mathbf{1}) \mathrm{d} \mu +\frac{\mathbf{b}}{\lambda}=\mathbf{0},
\end{equation}
where $\mathbf{1}:=(1,...,1)^{T}$.
Multiplying both sides of \eqref{515} by $\mathbf{1}^{T}$, we deduce that 
\begin{equation}\label{516}
	\int_{\Omega} \mathbf{U}^{T} Q(\mathbf{U}-\mathbf{1}) \mathrm{d} \mu+\frac{\mathbf{1}^{T} \mathbf{b}}{\lambda}=0.
\end{equation}
From \eqref{41} and \eqref{46}, we conclude that 
\begin{equation}\label{518}
	\left(K^{T}\right)^{-1} \mathbf{1}=S^{-1} P^{-1} \mathbf{1}=R \mathbf{1}.
\end{equation}
Combining \eqref{516} and \eqref{518}, we deduce that
\begin{equation}\label{517}	
		\int\limits_{V}\left(\mathbf{U}-\frac{\mathbf{1}}{2}\right)^{\tau} Q\left(\mathbf{U}-\frac{\mathbf{1}}{2}\right) \mathrm{d} \mu 
		=\frac{|V|}{4} \mathbf{1}^{\tau} P^{-1}\left(K^{\tau}\right)^{-1} \mathbf{1}-\frac{4 \pi \mathbf{1}^{\tau} P^{-1}\left(K^{\tau}\right)^{-1} \mathbf{N}}{\lambda}.
\end{equation}
Recall that $Q$ is positive define, it follows from \eqref{517} that
\begin{equation}
	\frac{|V|}{4} \mathbf{1}^{\tau} P^{-1}\left(K^{\tau}\right)^{-1} \mathbf{1}-\frac{4 \pi \mathbf{1}^{\tau} P^{-1}\left(K^{\tau}\right)^{-1} \mathbf{N}}{\lambda}>0,
\end{equation}
which implies that \eqref{4.7} holds. 
\end{proof}

\section{The proof of Theorem \ref{31}}
In this section, we formulate a variational  solution of equations \eqref{1} by using an equality type constraint. 
Define the energy functional by 
\begin{equation}\label{19}
	I(\mathbf{v})=\frac{1}{2} \sum_{j,k=1}^{n} \int\limits_{V}   b_{kj} \Gamma(v_{k},v_{j})  d \mu+\frac{\lambda}{2} \int\limits_{V}(\mathbf{U}-\mathbf{1})^{T} Q(\mathbf{U}-\mathbf{1}) \mathrm{d} \mu+\int\limits_{V} \frac{\mathbf{b}^{T} \mathbf{v}}{|V|} \mathrm{d} \mu,
\end{equation}
where $A=(b_{ij})_{n\times n}$. Due to the fact that $Q$ and $A$ are symmetric, we know that if $\mathbf{v}$ is a critical point to $I$, then it is a solution to \eqref{1}.

 We could work on the standard space $H^{1}(V):=W^{1,2}(V)$. Denote 
\begin{equation}
	H^{0}:=\{v\in W^{1,2}(V) | \int\limits_{V} v d \mu=0 \}.
\end{equation}
Clearly, for any $f\in H$, there exists a unique $c\in \mathbb{R}$ and $f^{'}\in H^{0}$ such that 
\begin{equation}\label{29}
	f=c+f^{'}
\end{equation}
In the sequal, we use $H^{1}(V)$ to denote the spaces of both scalar and vector-valued functions. 

Suppose that $\mathbf{v}=\mathbf{w}+\mathbf{c} \in H^{1}(V)$ given in \eqref{29} satisfies \eqref{515}, we deduce that \begin{equation}\label{61}
	\operatorname{diag}\left\{\mathrm{e}^{c_{1}}, \ldots, \mathrm{e}^{c_{n}}\right\} \tilde{Q}\left(\begin{array}{c}
		\mathrm{e}^{c_{1}} \\
		\vdots \\
		\mathrm{e}^{c_{n}}
	\end{array}\right)-P^{-1} R \operatorname{diag}\left\{a_{1}, \ldots, a_{n}\right\}\left(\begin{array}{c}
		\mathrm{e}^{c_{1}} \\
		\vdots \\
		\mathrm{e}^{c_{n}}
	\end{array}\right)+\frac{\mathbf{b}}{\lambda}=\mathbf{0},
\end{equation} 
where \begin{equation}\label{64}
		a_{i}  := a_{i}\left(w_{i}\right) = \int\limits_{V} \mathrm{e}^{u_{i}^{0}+w_{i}} \mathrm{~d} \mu ,
	\end{equation}
		 \begin{equation}\label{65}
		a_{i j}  := a_{i j}\left(w_{i}, w_{j}\right) = \int\limits_{V} \mathrm{e}^{u_{i}^{0}+u_{j}^{0}+w_{i}+w_{j}} \mathrm{~d} \mu, \quad i, j=1, \ldots, n,
\end{equation}
\begin{equation}\label{62}
	\tilde{Q} := \tilde{Q}(\mathbf{w}) = R \tilde{S} R,
\end{equation}
and
\begin{equation}\label{63}
	\tilde{S} \equiv\left(\begin{array}{cccccc}
		a_{11} \alpha_{11}  & -\alpha_{12} a_{12} & \cdots & \cdots & \cdots & -\alpha_{1 n} a_{1 n} \\
		\vdots & \vdots & \vdots & \vdots & \vdots & \vdots \\
		-\alpha_{i 1} a_{i 1} & -\alpha_{i 2} a_{i 2} & \cdots & \alpha_{i i} a_{i i} & \cdots & -\alpha_{i n} a_{i n} \\
		\vdots & \vdots & \vdots & \vdots & \vdots & \vdots \\
		-a_{n 1} \alpha_{n 1}  & -\alpha_{n 2} a_{n 2} & \cdots & \cdots & \cdots & a_{n n} \alpha_{n n} 
	\end{array}\right).
\end{equation}
By \eqref{62} and \eqref{63}, we deduce that 
\begin{equation}
	\tilde{Q}~\text{ is~positive~ positive~definite}. 
\end{equation}
We now write \eqref{61} as the component form:
\begin{equation}\label{68}
	\mathrm{e}^{2 c_{i}} R_{i}^{2} \alpha_{i i} a_{i i}-\mathrm{e}^{c_{i}}\left(\frac{R_{i} a_{i}}{P_{i}}+\sum_{j \neq i} \mathrm{e}^{c_{j}} R_{i} R_{j} \alpha_{i j} a_{i j}\right)+\frac{b_{i}}{\lambda}=0, \quad i=1, \ldots, n.
\end{equation}
Of course, \eqref{59} is a quadratic equation in $t=e^{c}$ which admits a solution if and only if 
\begin{equation}\label{67}
	\left(\frac{R_{i} a_{i}}{P_{i}}+\sum_{j \neq i} \mathrm{e}^{c_{j}} R_{i} R_{j} \alpha_{i j} a_{i j}\right)^{2} \geq \frac{4 R_{i}^{2} b_{i} \alpha_{i i} a_{i i}}{\lambda}, \quad i=1, \ldots, n.
\end{equation}
It is clear that \eqref{68} follows from the following inequalities
\begin{equation}\label{69}
	\frac{a_{i}^{2}}{a_{i i}} \geq \frac{4 \alpha_{i i} P_{i}^{2} b_{i}}{\lambda}, \quad i=1, \ldots, n.
\end{equation}
Denote 
\begin{equation}\label{A}
	\mathscr{A} \equiv\left\{\mathbf{w} \mid \mathbf{w} \in {H}^{0}(V) \text { such that } \eqref{69} \text { holds }\right\}.
\end{equation}
In this case, we may select $\mathbf{c}=\mathbf{c}(\mathbf{w}):=(c_1,...,c_n)$ in \eqref{68} to satisfy 
\begin{equation}\label{611}
	\begin{aligned}
		\mathrm{e}^{c_{i}}=& \frac{1}{2 R_{i}^{2} \alpha_{i i} a_{i i}}\left\{\left(\frac{R_{i} a_{i}}{P_{i}}+\sum_{j \neq i} \mathrm{e}^{c_{j}} R_{i} R_{j} \alpha_{i j} a_{i j}\right)\right.\\
		&+\sqrt{\left.\left(\frac{R_{i} a_{i}}{P_{i}}+\sum_{j \neq i} \mathrm{e}^{c_{j}} R_{i} R_{j} \alpha_{i j} a_{i j}\right)^{2}-\frac{4 b_{i} R_{i}^{2} \alpha_{i i} a_{i i}}{\lambda}\right\}} \\
		& =: f_{i}\left(\mathrm{e}^{c_{1}}, \ldots, \mathrm{e}^{c_{n}}\right), \quad i=1, \ldots, n .
	\end{aligned}
\end{equation}

To proof Lemma \ref{u8} and Lemma \ref{f1}, we give a priori estimates.

\begin{Lemma}
	For any $\mathbf{w} \in \mathscr{A}$ and $\epsilon \in [0,1]$, if $\mathbf{t}$ satisfies the following equations
	\begin{equation}\label{72}
		\mathbf{F}(\epsilon, \mathbf{t}) \equiv \mathbf{t}-\mathbf{f}(\epsilon, \mathbf{t})=\mathbf{0}, \quad \mathbf{t} \in \mathbb{R}_{+}^{n}, \quad \epsilon \in[0,1],
	\end{equation}
where 
\begin{equation}\label{73}
	\mathbf{f}(\epsilon, \mathbf{t}) \equiv\left(f_{1}(\epsilon, \mathbf{t}), \ldots, f_{n}(\epsilon, \mathbf{t})\right)^{T},
\end{equation}
and
\begin{equation}\label{74}
	\begin{aligned}
		f_{i}(\epsilon, \mathbf{t}) \equiv & \frac{1}{2 R_{i}^{2} \alpha_{i i} a_{i i}}\left\{\left(\frac{R_{i} a_{i}}{P_{i}}+\sum_{j \neq i} t_{j} R_{i} R_{j} \alpha_{i j} a_{i j}\right)\right.\\
		+&\left.\sqrt{\left(\frac{R_{i} a_{i}}{P_{i}}+\sum_{j \neq i} t_{j} R_{i} R_{j} \alpha_{i j} a_{i j}\right)-\frac{4 \epsilon b_{i} R_{i}^{2} \alpha_{i i} a_{i i}}{\lambda}}\right\} \\
		i &=1, \ldots, n,
	\end{aligned}
\end{equation}
then 
\begin{equation}\label{76}
	0<a_{i} t_{i} \leq|V|, \quad i=1, \ldots, n
\end{equation}
and
\begin{equation}\label{77}
	0< t_{i} \leq 1, \quad i=1, \ldots, n.
\end{equation}
\end{Lemma}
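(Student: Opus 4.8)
\emph{Strategy.} The plan is to extract everything from the scalar quadratic identities \eqref{68}, with the constant term $b_i/\lambda$ replaced by $\epsilon b_i/\lambda$ as in \eqref{74}. The only analytic inputs needed are two Cauchy--Schwarz estimates for the integrals \eqref{64}--\eqref{65} on $(V,\mu)$, Jensen's inequality, and the positivity of $S^{-1}$ assumed in \eqref{45}. Throughout write $A_i:=R_i^2\alpha_{ii}a_{ii}>0$ and $B_i:=\frac{R_ia_i}{P_i}+\sum_{j\ne i}t_jR_iR_j\alpha_{ij}a_{ij}$, so that $t_i=f_i(\epsilon,\mathbf t)$ means $t_i$ is the root with the $+$ sign of $A_it^2-B_it+\frac{\epsilon b_i}{\lambda}=0$, i.e. $t_i=\frac{1}{2A_i}\bigl(B_i+\sqrt{B_i^2-4\epsilon b_iA_i/\lambda}\bigr)$.

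\emph{Positivity.} First I would check that $f_i(\epsilon,\mathbf t)$ is a well-defined positive number. Since $t_j\ge0$ and $a_i>0$ we have $B_i\ge\frac{R_ia_i}{P_i}>0$, hence $B_i^2\ge\frac{R_i^2a_i^2}{P_i^2}$. Because $\mathbf w\in\mathscr A$ satisfies \eqref{69}, i.e. $\frac{a_i^2}{a_{ii}}\ge\frac{4\alpha_{ii}P_i^2b_i}{\lambda}$, and $0\le\epsilon\le1$, $b_i>0$, we get $\frac{R_i^2a_i^2}{P_i^2}\ge\frac{4\epsilon b_iR_i^2\alpha_{ii}a_{ii}}{\lambda}=\frac{4\epsilon b_iA_i}{\lambda}$, so the radicand is nonnegative and $f_i(\epsilon,\mathbf t)\ge\frac{B_i}{2A_i}>0$. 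Thus $t_i>0$, and $a_it_i>0$ since $a_i>0$.

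\emph{The bound \eqref{76}.} From \eqref{68} (with $\epsilon$) we have $A_it_i^2+\frac{\epsilon b_i}{\lambda}=B_it_i$; dropping $\frac{\epsilon b_i}{\lambda}\ge0$ and dividing by $R_it_i>0$ gives
$$R_i\alpha_{ii}a_{ii}t_i\le\frac{a_i}{P_i}+\sum_{j\ne i}\alpha_{ij}a_{ij}R_jt_j.$$
By Cauchy--Schwarz on $(V,\mu)$ one has $a_{ij}\le\sqrt{a_{ii}}\sqrt{a_{jj}}$ and $a_i\le\sqrt{|V|}\sqrt{a_{ii}}$; substituting these, dividing by $\sqrt{a_{ii}}>0$, and setting $z_i:=R_i\sqrt{a_{ii}}\,t_i\ge0$, the displayed inequality becomes
$$\alpha_{ii}z_i-\sum_{j\ne i}\alpha_{ij}z_j\le\frac{\sqrt{|V|}}{P_i},\qquad i=1,\dots,n,$$
that is, $S\mathbf z\le\sqrt{|V|}\,P^{-1}\mathbf 1$ componentwise. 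Since every entry of $S^{-1}$ is positive, $S^{-1}$ is order preserving, so $\mathbf z=S^{-1}(S\mathbf z)\le\sqrt{|V|}\,S^{-1}P^{-1}\mathbf 1=\sqrt{|V|}\,(K^{T})^{-1}\mathbf 1$ by \eqref{41}, and the $i$-th entry of $(K^{T})^{-1}\mathbf 1$ is $R_i$ by \eqref{518}. Hence $R_i\sqrt{a_{ii}}\,t_i\le\sqrt{|V|}\,R_i$, i.e. $a_{ii}t_i^2\le|V|$, and then $a_it_i\le\sqrt{|V|}\sqrt{a_{ii}}\,t_i\le|V|$, which is \eqref{76}.

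\emph{The bound \eqref{77}.} Since $\mathbf w\in H^{0}(V)$ we have $\int_V w_i\,d\mu=0$, and $\int_V u_i^0\,d\mu=0$ by construction, so Jensen's inequality applied to the probability measure $d\mu/|V|$ gives $a_i=\int_V e^{u_i^0+w_i}\,d\mu\ge|V|\exp\bigl(\frac1{|V|}\int_V(u_i^0+w_i)\,d\mu\bigr)=|V|$; combined with \eqref{76} this forces $t_i=\frac{a_it_i}{a_i}\le1$, which is \eqref{77}. The one step that is not routine bookkeeping is the change of variables $z_i=R_i\sqrt{a_{ii}}\,t_i$, which collapses the nonlinearly coupled system into the single linear inequality $S\mathbf z\le\sqrt{|V|}\,P^{-1}\mathbf 1$; once this is in place, the sign hypothesis \eqref{45} on $S^{-1}$ and the identity \eqref{518} finish the argument immediately.
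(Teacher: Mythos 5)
Your proof is correct and follows essentially the same route as the paper: bound $t_i$ by $B_i/A_i$ from the quadratic, apply Cauchy--Schwarz to get $a_{ij}\le\sqrt{a_{ii}a_{jj}}$ and $a_i\le\sqrt{|V|a_{ii}}$, invert the resulting linear inequality using the entrywise positivity of $S^{-1}$ (the paper phrases this with $Q^{-1}=R^{-1}S^{-1}R^{-1}$ and the substitution $z_i=R_i\sqrt{a_{ii}}\,t_i$ absorbed into diagonal matrices), use the identity $S^{-1}P^{-1}\mathbf 1=R\mathbf 1$, and finish \eqref{77} with Jensen's inequality. Your explicit verification that the radicand is nonnegative on $\mathscr A$ is a detail the paper leaves implicit, but the argument is otherwise the same.
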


\begin{proof}
	From \eqref{72}, \eqref{73} and \eqref{74}, we deduce that the left hand side of \eqref{76} and \eqref{77} holds.
	
	By \eqref{72}, we see that 
	\begin{equation}
		t_{i}=f_{i}(\epsilon, \mathbf{t}) \leq \frac{\frac{R_{i} a_{i}}{P_{i}}+\sum_{j \neq i} t_{j} R_{i} R_{j} \alpha_{i j} a_{i j}}{R_{i}^{2} \alpha_{i i} a_{i i}}, \quad i=1, \ldots, n,
	\end{equation}
which implies that 
\begin{equation}\label{k1}
	\tilde{Q} \mathbf{t} \leq P^{-1} R \mathbf{a},
\end{equation}
where \begin{equation}
	\mathbf{a} :=\left(a_{1}, \ldots, a_{n}\right)^{T}.
\end{equation}
From \eqref{64}, \eqref{65} and
 H$\ddot{\text{o}}$lder inequality, we conclude that 
 \begin{equation}\label{713}
 	a_{i j}^{2} \leq a_{i i} a_{j j}, \quad a_{i} \leq|V|^{\frac{1}{2}} a_{i i}^{\frac{1}{2}}, \quad i, j=1, \ldots, n.
 \end{equation}
 By $\eqref{713}$ and the fact that $t_{i}>0,$ for $i=1,...,n$, we conclude that 
 \begin{equation}\label{714}
 	\operatorname{diag}\left\{a_{11}^{\frac{1}{2}}, \ldots, a_{n n}^{\frac{1}{2}}\right\} Q \operatorname{diag}\left\{a_{11}^{\frac{1}{2}}, \ldots, a_{n n}^{\frac{1}{2}}\right\} \mathbf{t} \leq \tilde{Q} \mathbf{t}.
 \end{equation}
From \eqref{45} and \eqref{57}, we conclude that \begin{equation}\label{715}
	(Q^{-1})_{ij}>0,~i,j=1,...,n.
\end{equation}
 Thus, combining \eqref{713}, \eqref{714} and \eqref{715}, we conclude that 
 \begin{equation}
 	\begin{aligned}\label{716}
 		\mathbf{t} & \leq \operatorname{diag}\left\{a_{11}^{-\frac{1}{2}}, \ldots, a_{n n}^{-\frac{1}{2}}\right\} Q^{-1} \operatorname{diag}\left\{a_{11}^{-\frac{1}{2}}, \ldots, a_{n n}^{-\frac{1}{2}}\right\} P^{-1} R \mathbf{a} \\
 		&=\operatorname{diag}\left\{a_{11}^{-\frac{1}{2}}, \ldots, a_{n n}^{-\frac{1}{2}}\right\} Q^{-1} \operatorname{diag}\left\{a_{1} a_{11}^{-\frac{1}{2}}, \ldots, a_{n} a_{n n}^{-\frac{1}{2}}\right\} P^{-1} R \mathbf{1} .
 	\end{aligned}
 \end{equation}
 Therefore, by \eqref{518}, \eqref{713}, \eqref{715} and \eqref{716}, we deduce that 
 \begin{equation}\label{717}
 	\begin{aligned}
 		&\operatorname{diag}\left\{a_{1}, \ldots, a_{n}\right\} \mathbf{t} \\
 		&\leq \operatorname{diag}\left\{a_{1} a_{11}^{-\frac{1}{2}}, \ldots, a_{n} a_{n n}^{-\frac{1}{2}}\right\} Q^{-1} \operatorname{diag}\left\{a_{1} a_{11}^{-\frac{1}{2}}, \ldots, a_{n} a_{n n}^{-\frac{1}{2}}\right\} P^{-1} R \mathbf{1} \\
 		&\leq|V| Q^{-1} P^{-1} R \mathbf{1} \\
 		&=|V| \mathbf{1}.
 	\end{aligned}
 \end{equation}
 Thus, we get the right hand side of \eqref{76} holds. It follows from Jensen's inequality that $$\frac{a_i}{|V|}\ge e^ { \frac{\int\limits_{V} u_{1}^{0}+w_{i} d \mu }{|V|} },~i=1,...,n.$$ Then the right hand sides of \eqref{77} follows from this and \eqref{717}.
	
	We now complete the proof.
\end{proof}

The following result implies that we can solve constraints \eqref{611}, so constraints \eqref{68} could be solved.
\begin{Lemma}\label{434}
	For any $\mathbf{w} \in$ $ \mathscr{A}$, the equations 
	\begin{equation}\label{82}
		\mathbf{F}(\mathbf{t}) \equiv \mathbf{t}-\mathbf{f}(\mathbf{t})=\mathbf{0}, \quad \mathbf{t} \equiv\left(t_{1}, \ldots, t_{n}\right)^{T} \in \mathbb{R}_{+}^{n},
	\end{equation}
	 admits a solution $\mathbf{t} \in (0,\infty)^{n}$, where $\mathbb{R}_{+}^{n} \equiv\left(\mathbb{R}_{+}\right)^{n}, \mathbf{f}(\mathbf{t}) \equiv\left(f_{1}(\mathbf{t}), \ldots, f_{n}(\mathbf{t})\right)^{T}$.	
\end{Lemma}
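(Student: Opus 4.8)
Fix $\mathbf w\in\mathscr A$, so that $a_i,a_{ij}$ and $\mathbf a$ are fixed positive constants, and observe that \eqref{82} is precisely the $\epsilon=1$ case of $\mathbf F(\epsilon,\mathbf t)=\mathbf 0$ from the preceding lemma. The plan is a Brouwer degree (Leray--Schauder continuation) argument along the homotopy $\epsilon\in[0,1]$. First I would check that $(\epsilon,\mathbf t)\mapsto\mathbf f(\epsilon,\mathbf t)$ is continuous on $[0,1]\times\mathbb R_+^n$ with values in $\mathbb R_+^n$: for $\mathbf t\ge\mathbf 0$ and $\epsilon\in[0,1]$ the expression under each root in \eqref{74} is $\ge\bigl(\tfrac{R_ia_i}{P_i}\bigr)^2-\tfrac{4b_iR_i^2\alpha_{ii}a_{ii}}{\lambda}\ge0$, the last step being exactly the defining inequality \eqref{69} of $\mathscr A$; moreover $f_i(\epsilon,\mathbf t)\ge\ell_i:=\tfrac{a_i}{2R_iP_i\alpha_{ii}a_{ii}}>0$ for all such $\mathbf t,\epsilon$, since the bracket in \eqref{74} is $\ge\tfrac{R_ia_i}{P_i}$ and the root is nonnegative.

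Next, combining the a priori bound of the preceding lemma (every solution of $\mathbf F(\epsilon,\mathbf t)=\mathbf 0$ satisfies $0<t_i\le1$) with the lower bound $t_i=f_i(\epsilon,\mathbf t)\ge\ell_i$, I conclude that for every $\epsilon\in[0,1]$ all zeros of $\mathbf F(\epsilon,\cdot)$ lie in the compact box $\prod_i[\ell_i,1]$. Choosing $\Omega:=\prod_{i=1}^n(\ell_i/2,\,2)$, one has $\overline\Omega\subset(0,\infty)^n$ (so $\mathbf f(\epsilon,\cdot)$ is defined and continuous on $\overline\Omega$), and $\mathbf F(\epsilon,\mathbf t)\ne\mathbf 0$ on $\partial\Omega$ for every $\epsilon\in[0,1]$. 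Hence $\deg(\mathbf F(\epsilon,\cdot),\Omega,\mathbf 0)$ is well defined and, by homotopy invariance, constant in $\epsilon$.

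It remains to compute this degree at $\epsilon=0$, where the root in \eqref{74} collapses and $\mathbf F(0,\cdot)$ becomes affine: $\mathbf F(0,\mathbf t)=D^{-1}\tilde Q\mathbf t-D^{-1}P^{-1}R\mathbf a$, with $D=\operatorname{diag}\{R_i^2\alpha_{ii}a_{ii}\}$ and $\tilde Q$ the positive definite matrix of \eqref{62}. Its unique zero $\mathbf t^\ast=\tilde Q^{-1}P^{-1}R\mathbf a$ satisfies $\ell_i\le t^\ast_i\le1$ by the same two bounds with $\epsilon=0$, hence $\mathbf t^\ast\in\Omega$, and the (constant) Jacobian $D^{-1}\tilde Q$ has determinant $\det\tilde Q/\det D>0$. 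Therefore $\deg(\mathbf F(0,\cdot),\Omega,\mathbf 0)=1$, so $\deg(\mathbf F(1,\cdot),\Omega,\mathbf 0)=1\ne0$, and $\mathbf F(1,\cdot)=\mathbf F$ has a zero $\mathbf t\in\Omega\subset(0,\infty)^n$, which is the claimed solution of \eqref{82}.

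\textbf{Main obstacle.} The crux is the uniform-in-$\epsilon$ a priori estimate that keeps the homotopy fixed-point-free on $\partial\Omega$. The upper bound $t_i\le1$ is delivered by the preceding lemma --- itself resting on $a_{ij}^2\le a_{ii}a_{jj}$ and the positivity of the entries of $Q^{-1}$ --- while the positive lower bound $t_i\ge\ell_i$ is the extra elementary input that lets $\Omega$ be chosen with closure inside $(0,\infty)^n$, where $\mathbf f$ (and hence the degree) is actually defined. A monotone-iteration alternative is conceivable, since $f_i$ is nondecreasing in each $t_j$ with $j\ne i$, but exhibiting the required supersolution again needs essentially the same estimate, so the continuation argument is the more economical route.
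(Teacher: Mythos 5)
Your proposal is correct and follows essentially the same route as the paper: a homotopy in $\epsilon\in[0,1]$ combined with Brouwer degree, with the degree computed at $\epsilon=0$ where the map becomes affine with positive-definite (hence positive-determinant) Jacobian built from $\tilde Q$. The only difference is cosmetic but slightly more careful: you add the uniform lower bound $f_i\ge\ell_i>0$ so that $\overline\Omega\subset(0,\infty)^n$, whereas the paper takes $\Omega=(0,r_0)^n$ and relies implicitly on $F_i<0$ on the faces $t_i=0$.
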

\begin{proof}
	For the sake of convience, we write 
	\begin{equation}
		\left(\alpha_{1}, \ldots, \alpha_{n}\right)^{\tau}<(\leq)\left(\beta_{1}, \ldots, \beta_{n}\right)^{\tau} \text { if } \alpha_{i}<(\leq) \beta_{i}, i=1, \ldots, n,
	\end{equation}
and we use the same notation for matrices. We next find a solution to \eqref{82} with $\epsilon=1$.

 By \eqref{77}, we conclude that $\mathbf{F}(\epsilon, \mathbf{t})$ has no zero on the boundary of $\Omega$ for all $\mathbf{w} \in \mathcal{A}$ and $\epsilon \in[0,1]$, where $\Omega:=(0,r_0)^{n}$ and $r_0>1$ is a constant. Thus, we can define the Brouwer degree $\operatorname{deg}(\mathbf{F}(\epsilon, \mathbf{t}),\Omega, \mathbf{0}).$ Clearly, 
\begin{equation}\label{429}
	\mathbf{F}(0, \mathbf{t})=\mathbf{0}
\end{equation}
is equivalent to 
\begin{equation}\label{237}
	t_{i}-\frac{\frac{R_{i} a_{i}}{P_{i}}+\sum\limits_{j \neq i} t_{j} R_{i} R_{j} \alpha_{i j} a_{i j}}{R_{i}^{2} \alpha_{i i} a_{i i}}=0, \quad i=1, \ldots, n.
\end{equation}
We write \eqref{237} in its vector form
\begin{equation}
	\tilde{Q} \mathbf{t}=P^{-1} R \mathbf{a}.
\end{equation}
Since $\tilde{Q}$ is invertible, we know that \eqref{429} admits a unique solution \begin{equation}
	\mathbf{t}=\tilde{Q}^{-1} P^{-1} R \mathbf{a},
\end{equation}
By \eqref{77},we see that it belong to the interior of $\Omega$. By the fact that $\tilde{Q}$ is positive definite, we deduce that the Jacobian of $\mathbf{F}(0, \mathbf{t})$ is positive everywhere, and hence that $\operatorname{deg}(\mathbf{F}(0, \mathbf{t}),\Omega, \mathbf{0})=1$. It is easy to check that $\mathbf{F}(\epsilon, \mathbf{t})$ is a smooth function for any $\epsilon\in [0,1]$. Thus by homotopy invariance, we deduce that 
\begin{equation}
	\operatorname{deg}(\mathbf{F}(1, \mathbf{t}), \Omega, \mathbf{0})=\operatorname{deg}(\mathbf{F}(0, \mathbf{t}), \Omega, \mathbf{0}).
\end{equation}
 
 Now we complete the proof.
\end{proof}
 The following Lemma follows from Lemma \ref{434} immediately. 
 \begin{Lemma}\label{4}
 	For any $\mathbf{w} \in \mathscr{A}$, \eqref{68} admits a solution   $\mathbf{c}(\mathbf{w})=(c_{1}(\mathbf{w}),...,c_{n}(\mathbf{w}))^{T}$ which satisfies \eqref{611}, so that $\mathbf{v}=\mathbf{w}+\mathbf{c}(\mathbf{w})=(w_1+c_{1}(\mathbf{w}),...,w_n+c_{n}(\mathbf{w}))^{T}$ satisfies \eqref{515}.
 \end{Lemma}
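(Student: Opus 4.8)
The final statement to prove is Lemma~\ref{4}, which asserts that for any $\mathbf{w}\in\mathscr{A}$, the system \eqref{68} has a solution $\mathbf{c}(\mathbf{w})$ satisfying \eqref{611}, so that $\mathbf{v}=\mathbf{w}+\mathbf{c}(\mathbf{w})$ satisfies \eqref{515}. The plan is to observe that this is essentially a bookkeeping consequence of Lemma~\ref{434} together with the algebraic derivation already carried out in Section~4, so the ``proof'' amounts to tracing the equivalences backwards.

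First I would recall that Lemma~\ref{434} (applied with $\epsilon=1$, which is the relevant case since $\mathbf{f}(\mathbf{t})=\mathbf{f}(1,\mathbf{t})$) produces a vector $\mathbf{t}=(t_1,\dots,t_n)^{T}\in(0,\infty)^{n}$ solving $\mathbf{F}(\mathbf{t})=\mathbf{t}-\mathbf{f}(\mathbf{t})=\mathbf{0}$, i.e. $t_i=f_i(\mathbf{t})$ for $i=1,\dots,n$, where $f_i$ is the expression in \eqref{74} with $\epsilon=1$. Here one must note that the membership $\mathbf{w}\in\mathscr{A}$, i.e. inequality \eqref{69}, guarantees via \eqref{67}--\eqref{68} that the quantities under the square roots defining $f_i$ are nonnegative, so $\mathbf{f}(\mathbf{t})$ is well defined and the fixed point $\mathbf{t}$ has strictly positive components (the left-hand inequalities of \eqref{76}--\eqref{77} give positivity).

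Next I would set $c_i:=\ln t_i$ for $i=1,\dots,n$; since $t_i>0$ this is legitimate, and then $\mathrm{e}^{c_i}=t_i$. Substituting $t_i=\mathrm{e}^{c_i}$ into the fixed-point relation $t_i=f_i(\mathbf{t})$ turns it into precisely equation \eqref{611}, because $f_i(1,\mathbf{t})$ evaluated at $t_j=\mathrm{e}^{c_j}$ is exactly the right-hand side of \eqref{611}. Then I would reverse the elementary derivation that led from \eqref{61} to \eqref{68}: \eqref{611} says $\mathrm{e}^{c_i}$ is the larger root of the quadratic \eqref{68}, hence $\mathbf{c}=\mathbf{c}(\mathbf{w})$ solves \eqref{68} componentwise. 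Writing \eqref{68} back in vector/matrix form recovers \eqref{61}, namely $\operatorname{diag}\{\mathrm{e}^{c_1},\dots,\mathrm{e}^{c_n}\}\tilde{Q}(\mathrm{e}^{c_1},\dots,\mathrm{e}^{c_n})^{T}-P^{-1}R\operatorname{diag}\{a_1,\dots,a_n\}(\mathrm{e}^{c_1},\dots,\mathrm{e}^{c_n})^{T}+\mathbf{b}/\lambda=\mathbf{0}$. Finally, recalling that \eqref{61} was obtained from \eqref{515} by the decomposition $\mathbf{v}=\mathbf{w}+\mathbf{c}$ (with $\mathbf{U}$ having components $\mathrm{e}^{u_i^0+w_i+c_i}=a_i$-normalised pieces times $\mathrm{e}^{c_i}$ appearing in $a_{ij}$, $a_i$), I conclude that $\mathbf{v}=\mathbf{w}+\mathbf{c}(\mathbf{w})$ satisfies \eqref{515}, which is the claim.

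The main obstacle, such as it is, is purely verificational: making sure that the passage between the scalar form \eqref{68}, the ``solved'' form \eqref{611}, and the matrix form \eqref{61} is genuinely an equivalence and not merely an implication, and that the $a_i,a_{ij}$ defined in \eqref{64}--\eqref{65} are consistently the ones appearing after substituting $\mathbf{v}=\mathbf{w}+\mathbf{c}$ into the exponentials (so that $\mathrm{e}^{u_i^0+v_i}=\mathrm{e}^{c_i}\mathrm{e}^{u_i^0+w_i}$ and the integrals factor as $\mathrm{e}^{c_i}a_i$, $\mathrm{e}^{c_i+c_j}a_{ij}$). Once this is checked, Lemma~\ref{4} follows immediately from Lemma~\ref{434}, and I would present the proof in just a few lines: invoke Lemma~\ref{434} to get $\mathbf{t}$, set $c_i=\ln t_i$, note \eqref{611} holds by construction, hence \eqref{68} and therefore \eqref{61} hold, hence \eqref{515} holds for $\mathbf{v}=\mathbf{w}+\mathbf{c}(\mathbf{w})$.
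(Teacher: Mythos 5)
Your proposal is correct and takes essentially the same route as the paper, which offers no written proof at all and simply states that Lemma~\ref{4} ``follows from Lemma~\ref{434} immediately.'' Your write-up (take the positive fixed point $\mathbf{t}$ from Lemma~\ref{434}, set $c_i=\ln t_i$, and trace the equivalences \eqref{611}~$\Rightarrow$~\eqref{68}~$\Leftrightarrow$~\eqref{61}~$\Leftrightarrow$~\eqref{515}) is precisely the bookkeeping the paper leaves implicit.
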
 

Define the constrained functional 
\begin{equation}\label{J}
	J(\mathbf{w}) := I(\mathbf{w}+\mathbf{c}(\mathbf{w})), \quad \mathbf{w} \in \mathscr{A}.
\end{equation}
  For all $\mathbf{w} \in \mathscr{A}$, since $\mathbf{v}=\mathbf{w}+\mathbf{c}(\mathbf{w})$ satisfies \eqref{515}, we conclude that 
  \begin{equation}
  	\begin{aligned}
  		\int\limits_{V}(\mathbf{U}-\mathbf{1})^{\tau} Q(\mathbf{U}-\mathbf{1}) \mathrm{d} \mu &=\int\limits_{V} \mathbf{1}^{T} Q(\mathbf{1}-\mathbf{U}) \mathrm{d} \mu-\frac{\mathbf{1}^{T} \mathbf{b}}{\lambda} \\
  		&=\int\limits_{V} \mathbf{1}^{T} P^{-1} R(\mathbf{1}-\mathbf{U}) \mathrm{d} \mu-\frac{\mathbf{1}^{\tau} \mathbf{b}}{\lambda}.
  	\end{aligned}
  \end{equation}
By \eqref{19}, we deduce that 
\begin{equation}
	\begin{aligned}
		J(\mathbf{w})=& \frac{1}{2} \sum_{j,k=1}^{n} \int\limits_{V}   b_{kj} \Gamma(v_{k},v_{j})  d \mu+\frac{\lambda}{2} \mathbf{1}^{T} P^{-1} R \int\limits_{V}(\mathbf{1}-\mathbf{U}) \mathrm{d} \mu+\mathbf{b}^{T} \mathbf{c}-\frac{\mathbf{1}^{T} \mathbf{b}}{2} \\
		=& \frac{1}{2} \sum_{j,k=1}^{n} \int\limits_{V}   b_{kj} \Gamma(v_{k},v_{j})  d \mu+\frac{\lambda}{2} \sum_{i=1}^{n} \frac{R_{i}}{P_{i}} \int\limits_{V}\left(1-\mathrm{e}^{c_{i}} \mathrm{e}^{u_{i}^{0}+w_{i}}\right) \mathrm{d} \mu \\
		&+\sum_{i=1}^{n} b_{i} c_{i}-\frac{1}{2} \sum_{i=1}^{n} b_{i}.
	\end{aligned}
\end{equation}

To prove Lemma \ref{x4}, we need the following result.

\begin{Lemma}\label{jian}
	Suppose that $\mathbf{w} \in \mathscr{A}$ and $\tau \in(0,1)$. Then  
	\begin{equation}
		\int\limits_{V} \mathrm{e}^{u_{i}^{0}+w_{i}} \mathrm{~d} \mu \leq\left(\frac{\lambda}{4 P_{i}^{2} b_{i} \alpha_{i i}}\right)^{\frac{1-\tau}{\tau}}\left(\int\limits_{V} \mathrm{e}^{\tau u_{i}^{0}+\tau w_{i}} \mathrm{~d} \mu \right)^{\frac{1}{\tau}}, \quad i=1, \ldots, n
	\end{equation}
\end{Lemma}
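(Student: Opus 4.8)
The plan is to pit the constraint \eqref{69} defining $\mathscr{A}$ against an interpolation inequality applied to the single positive function $g_{i} := \mathrm{e}^{u_{i}^{0}+w_{i}}$. With the notation \eqref{64}--\eqref{65} one has $a_{i} = \int_{V} g_{i}\, d\mu = \|g_{i}\|_{L^{1}(V)}$ and $a_{ii} = \int_{V} g_{i}^{2}\, d\mu = \|g_{i}\|_{L^{2}(V)}^{2}$, and membership $\mathbf{w}\in\mathscr{A}$ is precisely the statement that $a_{ii} \le \frac{\lambda}{4\alpha_{ii}P_{i}^{2}b_{i}}\,a_{i}^{2}$.

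First I would apply the interpolation inequality (Lemma \ref{i}) with $r=1$, $s=2$, $t=\tau$. The relation $\frac{1}{r} = \frac{\theta}{s} + \frac{1-\theta}{t}$ then forces $\theta = \frac{2(1-\tau)}{2-\tau}$, and since $\tau\in(0,1)$ one readily checks that $\theta\in(0,1)$ and $(1-\theta)r = \frac{\tau}{2-\tau}\le\tau=t$, so the hypotheses of Lemma \ref{i} are met. This gives
\begin{equation}
	a_{i} \le \|g_{i}\|_{L^{2}(V)}^{\theta}\,\|g_{i}\|_{L^{\tau}(V)}^{1-\theta} = a_{ii}^{\frac{1-\tau}{2-\tau}}\left(\int_{V}\mathrm{e}^{\tau u_{i}^{0}+\tau w_{i}}\,d\mu\right)^{\frac{1}{2-\tau}}.
\end{equation}
(If one prefers not to invoke Lemma \ref{i}, the same estimate follows by writing $g_{i} = (g_{i}^{2})^{\frac{1-\tau}{2-\tau}}(g_{i}^{\tau})^{\frac{1}{2-\tau}}$ pointwise and applying H\"older's inequality directly, with conjugate exponents $\frac{2-\tau}{1-\tau}$ and $2-\tau$.)

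Next I would substitute the bound $a_{ii} \le \frac{\lambda}{4\alpha_{ii}P_{i}^{2}b_{i}}\,a_{i}^{2}$ coming from $\mathbf{w}\in\mathscr{A}$ into the last display, obtaining an inequality of the form $a_{i} \le \big(\tfrac{\lambda}{4\alpha_{ii}P_{i}^{2}b_{i}}\big)^{\frac{1-\tau}{2-\tau}}\,a_{i}^{\frac{2(1-\tau)}{2-\tau}}\big(\int_{V}\mathrm{e}^{\tau u_{i}^{0}+\tau w_{i}}\,d\mu\big)^{\frac{1}{2-\tau}}$. Since $a_{i}>0$, I can divide by the power of $a_{i}$ on the right; the exponent of $a_{i}$ surviving on the left is $1 - \frac{2(1-\tau)}{2-\tau} = \frac{\tau}{2-\tau}>0$, so raising both sides to the power $\frac{2-\tau}{\tau}$ and using $\frac{1-\tau}{2-\tau}\cdot\frac{2-\tau}{\tau} = \frac{1-\tau}{\tau}$ yields exactly the claimed inequality.

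The argument has essentially no hard part: it is one interpolation step followed by elementary manipulation of exponents. The only points demanding a little care are verifying that the interpolation exponents are admissible for the sub-unit value $t=\tau$ (so that the $L^{\tau}$-quantity, which is not a genuine norm, still appears legitimately through H\"older), and keeping the bookkeeping of exponents straight when solving the resulting inequality for $a_{i}$.
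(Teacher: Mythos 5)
Your proof is correct and follows essentially the same route as the paper: the paper's argument is exactly this interpolation of $\|g_i\|_{L^1}$ between $\|g_i\|_{L^\tau}$ and $\|g_i\|_{L^2}$ (written there as a direct H\"older application with exponent $a=\frac{1}{2-\tau}$), followed by substituting the constraint \eqref{69} in the form $a_{ii}\le \frac{\lambda}{4\alpha_{ii}P_i^2 b_i}a_i^2$ and solving for $a_i$. Your explicit check that the exponents are admissible in Lemma \ref{i} is a welcome extra detail, but the substance is identical.
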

\begin{proof}
Let $a=\frac{1}{2-\tau}$. By \eqref{69} and Lemma \ref{i}, we conclude that 
	\begin{equation}
		\begin{aligned}[]
				\int\limits_{V} \mathrm{e}^{u_{i}^{0}+w_{i}} \mathrm{~d} \mu &\le \left[ \int\limits_{V} \left( \mathrm{e}^{u_{i}^{0}+w_{i}} \right) ^{\tau}\mathrm{~d} \mu\right] ^{a} \left[ \int\limits_{V} \mathrm{e}^{2(u_{i}^{0}+w_{i})} \mathrm{~d} \mu\right] ^{1-a} \\
				&\le\left[ \int\limits_{V} \left( \mathrm{e}^{u_{i}^{0}+w_{i}} \right) ^{\tau}\mathrm{~d} \mu\right] ^{a}
				\left[ \frac{\lambda}{4 P_{i}^{2} b_{i} \alpha_{i i}}   \left( \int\limits_{V} \mathrm{e}^{u_{i}^{0}+w_{i}} d \mu \right)^{2}      \right] ^{1-a},~i=1,...,n,
		\end{aligned}
	\end{equation}
and hence that

\begin{equation}
	\begin{aligned}
		\int\limits_{V} \mathrm{e}^{u_{i}^{0}+w_{i}} \mathrm{~d} \mu &\le \left( \frac{\lambda}{4 P_{i}^{2} b_{i} \alpha_{i i}} \right) ^{\frac{1-a}{2a-1}} \left( \int\limits_{V} \mathrm{e}^{\tau(u_{i}^{0}+w_{i})} \mathrm{~d} \mu \right) ^{\frac{a}{2a-1}} \\
	&	\le \left( \frac{\lambda}{4 P_{i}^{2} b_{i} \alpha_{i i}} \right) ^{\frac{1-\tau}{\tau}} \left( \int\limits_{V} \mathrm{e}^{\tau(u_{i}^{0}+w_{i})} \mathrm{~d} \mu \right) ^{\frac{1}{\tau}} ,~i=1,...,n.		
	\end{aligned}
\end{equation}
\end{proof}

\begin{Lemma}\label{x4}
	Suppose that $\gamma$ is the smallest eigenvalues of $A$. Then 
	\begin{equation}\label{x}
		J(\mathbf{w}) \geq \frac{\gamma}{4} \sum_{i=1}^{n} \int\limits_{V} \Gamma(w_{i},w_{i}) d \mu-C(\ln \lambda+1),
	\end{equation}
for all $\mathbf{w} \in \mathscr{A}$, where $C>0$ is a constant independent of $\lambda$. 
\end{Lemma}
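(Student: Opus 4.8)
The plan is to work from the closed form of $J(\mathbf{w})$ derived just before the statement,
\[
J(\mathbf{w}) = \frac{1}{2} \sum_{j,k=1}^{n} \int_{V} b_{kj}\, \Gamma(v_{k},v_{j})\, d\mu + \frac{\lambda}{2} \sum_{i=1}^{n} \frac{R_{i}}{P_{i}} \int_{V}\left(1 - \mathrm{e}^{c_{i}}\mathrm{e}^{u_{i}^{0}+w_{i}}\right) d\mu + \sum_{i=1}^{n} b_{i} c_{i} - \frac{1}{2}\sum_{i=1}^{n} b_{i},
\]
and to bound its three non-constant pieces. Since $v_{i}=w_{i}+c_{i}$ with $c_{i}$ constant, $\Gamma(v_{k},v_{j})=\Gamma(w_{k},w_{j})$; and since $A=(b_{ij})=P^{-1}S^{-1}P^{-1}$ is symmetric positive definite with smallest eigenvalue $\gamma>0$, the edge-by-edge quadratic-form bound $\sum_{j,k}b_{kj}(w_{k}(y)-w_{k}(x))(w_{j}(y)-w_{j}(x))\ge\gamma\sum_{k}(w_{k}(y)-w_{k}(x))^{2}$ gives
\[
\frac{1}{2}\sum_{j,k=1}^{n}\int_{V}b_{kj}\,\Gamma(w_{k},w_{j})\,d\mu \ \ge\ \frac{\gamma}{2}\sum_{i=1}^{n}\int_{V}\Gamma(w_{i},w_{i})\,d\mu.
\]
With $a_{i}=\int_{V}\mathrm{e}^{u_{i}^{0}+w_{i}}\,d\mu$ as in \eqref{64}, the middle piece equals $\tfrac{\lambda}{2}\sum_{i}\tfrac{R_{i}}{P_{i}}(|V|-\mathrm{e}^{c_{i}}a_{i})\ge 0$ by the a priori bound \eqref{76} (with $t_{i}=\mathrm{e}^{c_{i}}$), since $\lambda,R_{i},P_{i}>0$; and $-\tfrac12\sum_{i}b_{i}$ is a fixed constant. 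Hence it suffices to prove $\sum_{i}b_{i}c_{i}\ge -\tfrac{\gamma}{4}\sum_{i}\int_{V}\Gamma(w_{i},w_{i})\,d\mu - C(\ln\lambda+1)$.

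Next I would produce a lower bound for each $c_{i}$. By \eqref{611}, $\mathrm{e}^{c_{i}}$ is the larger root of the upward parabola in $t=\mathrm{e}^{c_{i}}$ appearing in \eqref{68}, hence it is at least the abscissa of its vertex:
\[
\mathrm{e}^{c_{i}} \ \ge\ \frac{1}{2R_{i}^{2}\alpha_{ii}a_{ii}}\left(\frac{R_{i}a_{i}}{P_{i}}+\sum_{j\neq i}\mathrm{e}^{c_{j}}R_{i}R_{j}\alpha_{ij}a_{ij}\right)\ \ge\ \frac{a_{i}}{2P_{i}R_{i}\alpha_{ii}a_{ii}},
\]
where the last step drops the nonnegative off-diagonal terms ($\alpha_{ij}\ge0$, $a_{ij}>0$). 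Thus $c_{i}\ge \ln a_{i}-\ln a_{ii}-C$. Now I would invoke the definition of $\mathscr{A}$: rewriting \eqref{69} as $a_{ii}\le \lambda a_{i}^{2}/(4\alpha_{ii}P_{i}^{2}b_{i})$ turns this into
\[
c_{i}\ \ge\ -\ln a_{i}-\ln\lambda-C .
\]

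It remains to bound $\ln a_{i}$ from above. Fix $\tau\in(0,1)$. Lemma \ref{jian} gives $\ln a_{i}\le \tfrac{1-\tau}{\tau}\ln\lambda+\tfrac1\tau\ln\!\int_{V}\mathrm{e}^{\tau(u_{i}^{0}+w_{i})}\,d\mu+C$, and, since $u_{i}^{0}+w_{i}$ has zero mean, Young's inequality together with the Trudinger--Moser inequality (Lemma \ref{mt}, with a prescribed exponent $\beta>0$, applied to $u_{i}^{0}+w_{i}$ normalized in the Dirichlet norm) and $|\nabla(u_{i}^{0}+w_{i})|^{2}\le2|\nabla u_{i}^{0}|^{2}+2|\nabla w_{i}|^{2}$ (the function $u_{i}^{0}$ being fixed) yield $\ln\!\int_{V}\mathrm{e}^{\tau(u_{i}^{0}+w_{i})}\,d\mu\le \tfrac{\tau^{2}}{2\beta}\int_{V}\Gamma(w_{i},w_{i})\,d\mu+C(\beta)$. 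Combining these,
\[
c_{i}\ \ge\ -\frac{1}{\tau}\ln\lambda-\frac{\tau}{2\beta}\int_{V}\Gamma(w_{i},w_{i})\,d\mu-C(\beta).
\]
Multiplying by $b_{i}>0$, summing over $i$, and then choosing $\beta$ large — depending only on $\gamma$, $\tau$ and $\max_{i}b_{i}$, hence independent of $\lambda$ — so that $\tfrac{\tau\max_{i}b_{i}}{2\beta}\le\tfrac{\gamma}{4}$, I obtain $\sum_{i}b_{i}c_{i}\ge -\tfrac{\gamma}{4}\sum_{i}\int_{V}\Gamma(w_{i},w_{i})\,d\mu-C\ln\lambda-C$. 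Substituting the three bounds into the formula for $J(\mathbf{w})$ then gives $J(\mathbf{w})\ge\bigl(\tfrac{\gamma}{2}-\tfrac{\gamma}{4}\bigr)\sum_{i}\int_{V}\Gamma(w_{i},w_{i})\,d\mu-C(\ln\lambda+1)$, which is \eqref{x}. I expect the only real obstacle to be the lower bound on $c_{i}$: one must make the coefficient of $\int_{V}\Gamma(w_{i},w_{i})\,d\mu$ as small as desired while keeping every remaining constant free of $\lambda$, and this forces the simultaneous use of the larger-root estimate for $\mathrm{e}^{c_{i}}$, the constraint \eqref{69}, and Lemma \ref{jian} with Trudinger--Moser, with the exponents arranged so that the Trudinger--Moser term carries an adjustable small multiple of the gradient energy. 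The positive-definiteness reduction and the sign of the $\lambda$-term are routine by comparison.
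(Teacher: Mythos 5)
Your proposal is correct and follows essentially the same route as the paper's proof: drop the nonnegative $\lambda$-term via \eqref{76}, bound the quadratic form below by the smallest eigenvalue $\gamma$ of $A$, obtain $c_i\ge -\ln a_i-\ln\lambda-C$ from the larger-root/vertex estimate in \eqref{611} combined with the constraint \eqref{69}, and control $\ln a_i$ by Lemma \ref{jian} plus the Trudinger--Moser estimate with an adjustable small multiple of the Dirichlet energy. Your $\beta$-parameter formulation of the last step is just the paper's Cauchy-with-$\epsilon$ argument in \eqref{g}--\eqref{b1} in different clothing, so no further comparison is needed.
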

\begin{proof}
	From \eqref{57} and \eqref{J}, we deduce that 
	\begin{equation}\label{b7}
		J(\mathbf{w}) \geq \frac{\gamma}{2} \sum_{i=1}^{n}\left\|\nabla w_{i}\right\|_{2}^{2}+\sum_{i=1}^{n} b_{i} c_{i}.
	\end{equation}
From \eqref{69} and  \eqref{611}, we deduce that 
\begin{equation}
	\mathrm{e}^{c_{i}} \geq \frac{a_{i}}{2 R_{i} P_{i}  \alpha_{i i} a_{i i}}\ge
		\frac{2 b_{i} P_{i} }{\lambda R_{i} a_{i}}=\frac{2 P_{i} b_{i}}{\lambda R_{i} \int\limits_{V} \mathrm{e}^{u_{i}^{0}+w_{i}} \mathrm{~d} x}, \quad i=1, \ldots, n.
\end{equation}
It follows from that 
\begin{equation}\label{b11}
	c_{i} \geq \ln \frac{2 P_{i} b_{i}}{R_{i}}- \ln \lambda \int\limits_{V} \mathrm{e}^{u_{i}^{0}+w_{i}} \mathrm{~d} x, \quad i=1, \ldots, n.
\end{equation}
By Cauchy inequlity with $\epsilon$($\epsilon$>0) and \eqref{mt},
we deduce that 
\begin{equation}\label{g}
	\begin{aligned}
		\int\limits_{V} e^{w} d\mu &\le \int\limits_{V} e^{\frac{w}{||\nabla w||^{2}_{2}} ||\nabla w||^{2}_{2} } d\mu\\
		&\le \int\limits_{V} e^{\frac{w}{ 4\epsilon||\nabla w||^{2}_{2}}} d\mu e^{ {\epsilon}||\nabla w||_{2}^{2} }\\
		&=:C(\epsilon,G) e^{ \epsilon||\nabla w||^{2}_{2} }.
	\end{aligned}
\end{equation}
It is easy to check that 
\begin{equation}\label{446}
	\begin{aligned}
		||\nabla u^{0}_{i}||^{2}_{2} &= -\int\limits_{V} u_{i}^{0} \Delta u_{i}^{0} d\mu \\
		&= -\int\limits_{V} u^{0}_{i} 4\pi \sum_{s=1}^{N_{i}} \delta_{p_{i s}} d \mu\\
		&=- 4\pi \sum_{s=1}^{N_{i}} u_{i}^{0} (p_{i s }) \\
		&\le 4\pi N_{i} \max_{V} |u_{i}^{0}| \quad i=1, \ldots, n .
	\end{aligned}
\end{equation}
By Lemma \ref{jian}, \eqref{446} and \eqref{g}, we conclude that  
\begin{equation}\label{b1}
	\begin{aligned}
		\ln \int\limits_{V} \mathrm{e}^{u_{i}^{0}+w_{i}} \mathrm{~d} \mu \leq & \frac{1-\tau}{\tau}\left\{\ln \lambda-\ln \left(4 P_{i}^{2} b_{i} \alpha_{i i}\right)\right\}+\frac{1}{\tau} \ln \int\limits_{V} \mathrm{e}^{\tau u_{i}^{0}+s w_{i}} \mathrm{~d} \mu \\
		\leq & 2\epsilon \tau\left\|\nabla w_{i}\right\|_{2}^{2}+ 2\epsilon\tau 4\pi N_{i} \max_{V} |u_{i}^{0}|+ \frac{1-\tau}{\tau}\left\{\ln \lambda-\ln \left(4 P_{i}^{2} b_{i} \alpha_{i i}\right)\right\} \\
		&+\frac{\ln C}{\tau}, \quad i=1, \ldots, n .
	\end{aligned}
\end{equation}
It follows from \eqref{b1}, \eqref{b11} and \eqref{b7} that 
\begin{equation}\label{448}
	\begin{aligned}
		J(\mathbf{w}) \geq &\left(\frac{\gamma}{2}- 2\epsilon \tau \max _{1 \leq i \leq n}\left\{b_{i}\right\} \right) \sum_{i=1}^{n}\left\|\nabla w_{i}\right\|_{2}^{2}-\frac{1}{s} \sum_{i=1}^{n} b_{i}\left\{\ln \lambda-\ln \left(4 P_{i}^{2} b_{i} \alpha_{i i}\right)+\ln C\right\} \\
		&-\sum_{i=1}^{n} b_{i}\left\{\ln \left(2 R_{i} P_{i} b_{i} \alpha_{i i}\right)+ 2\epsilon\tau 4\pi N_{i} \max_{V} |u_{i}^{0}| \right\}
	\end{aligned}.
\end{equation}
Taking $\tau$ sufficiently small in \eqref{448}, we get the desired conclusion \eqref{x}.
\end{proof}

By Lemma \ref{x4}, we can select a minimizing sequence $\{ \mathbf{w}^{k} \}:=\{(w_{1}^{(k)},...,w_{n}^{(k)}) \}$ of the constrained minimization problem 
\begin{equation}
	\eta=\inf\{J(\mathbf{w})| w\in \mathcal{A} \}.
\end{equation}
By Lemma \ref{x4} and Lemma \ref{21}, we see that, there exists $$\mathbf{w}^{0}:=\{(w_{1}^{(0)},...,w_{n}^{(0)}) \} \in H^{0}(V)$$ such that, by passing to a subsequence, denoted still by $\{\mathbf{w}^{k} \}$, 
$${w}_{i}^{(k)} \to {w}_{i}^{(0)} $$ uniformly for $x\in V$ as $k\to +\infty$ for $i=1,...,n$. By the fact that $$\lim\limits_{k\to +\infty} J(\mathbf{w}^{k})=J(\mathbf{w}^{0}),$$ we deduce that $J(\mathbf{w}^{0})=\eta$. Thus $\mathbf{w}^{0}$ is a minimizer of $J$. Next, we prove that the
minimizer belongs to the interior of $\mathscr{A}$.

\begin{Lemma}\label{u8}
	There holds the following inequlities
	\begin{equation}\label{4d}
		\inf _{\mathbf{w} \in \partial \mathscr{A}} J(\mathbf{w}) \geq \frac{|V| \lambda}{2} \min _{1 \leq i \leq n}\left\{\frac{R_{i}}{P_{i}}\right\}-C(1+\ln \lambda+\sqrt{\lambda}),
	\end{equation}
wher $C$ is constant independent of $\lambda$.
\end{Lemma}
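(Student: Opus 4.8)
The plan is to start from the lower bound already furnished by Lemma \ref{x4}, namely
$$
J(\mathbf{w}) \geq \frac{\gamma}{4}\sum_{i=1}^{n}\int\limits_{V}\Gamma(w_i,w_i)\,d\mu - C(\ln\lambda+1),
$$
and combine it with the structural information that on $\partial\mathscr{A}$ at least one of the defining inequalities \eqref{69} degenerates to equality, i.e. there is an index $i_0$ with $a_{i_0}^2/a_{i_0 i_0} = 4\alpha_{i_0 i_0}P_{i_0}^2 b_{i_0}/\lambda$. First I would revisit the expression for $J(\mathbf{w})$ obtained right after \eqref{J},
$$
J(\mathbf{w}) = \frac{1}{2}\sum_{j,k=1}^{n}\int\limits_{V} b_{kj}\Gamma(v_k,v_j)\,d\mu + \frac{\lambda}{2}\sum_{i=1}^{n}\frac{R_i}{P_i}\int\limits_{V}\bigl(1-\mathrm{e}^{c_i}\mathrm{e}^{u_i^0+w_i}\bigr)\,d\mu + \sum_{i=1}^{n} b_i c_i - \frac{1}{2}\sum_{i=1}^{n} b_i,
$$
and isolate the single term with the favourable sign: using $\mathrm{e}^{c_i}a_i = \mathrm{e}^{c_i}\int_V \mathrm{e}^{u_i^0+w_i}d\mu \le |V|$ from \eqref{76}, the quantity $\frac{\lambda}{2}\sum_i \frac{R_i}{P_i}\int_V(1-\mathrm{e}^{c_i}\mathrm{e}^{u_i^0+w_i})d\mu$ is bounded below by $\frac{\lambda}{2}\sum_i \frac{R_i}{P_i}(|V| - a_i \mathrm{e}^{c_i})$... but that is $\ge 0$ only term-by-term and does not yet give the claimed $\frac{|V|\lambda}{2}\min_i\{R_i/P_i\}$. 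The correct route is to show that on $\partial\mathscr{A}$ the chosen root $\mathrm{e}^{c_{i_0}}$ in \eqref{611} is small — indeed on the boundary the square root vanishes for $i=i_0$, so $\mathrm{e}^{c_{i_0}} = \frac{R_{i_0}a_{i_0}/P_{i_0} + \sum_{j\ne i_0}\mathrm{e}^{c_j}R_{i_0}R_j\alpha_{i_0 j}a_{i_0 j}}{2R_{i_0}^2\alpha_{i_0 i_0}a_{i_0 i_0}}$, and more importantly that $\mathrm{e}^{c_{i_0}}a_{i_0}$ is bounded, so the $i_0$-summand of $\frac{\lambda}{2}\frac{R_{i_0}}{P_{i_0}}\int_V(1-\mathrm{e}^{c_{i_0}}\mathrm{e}^{u_{i_0}^0+w_{i_0}})d\mu = \frac{\lambda}{2}\frac{R_{i_0}}{P_{i_0}}(|V| - \mathrm{e}^{c_{i_0}}a_{i_0})$ contributes at least $\frac{\lambda}{2}\frac{R_{i_0}}{P_{i_0}}(|V| - \mathrm{e}^{c_{i_0}}a_{i_0})$.

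The key computation is therefore to estimate $\mathrm{e}^{c_{i_0}}a_{i_0}$ from above on the boundary. On $\partial\mathscr{A}$ one has $a_{i_0}^2 = 4\alpha_{i_0 i_0}P_{i_0}^2 b_{i_0}a_{i_0 i_0}/\lambda$, hence $a_{i_0} = 2P_{i_0}\sqrt{\alpha_{i_0 i_0}b_{i_0}a_{i_0 i_0}/\lambda}$, which is $O(\lambda^{-1/2})$ once $a_{i_0 i_0}$ is controlled. Using \eqref{713} ($a_{i_0 i_0}\le \max_V \mathrm{e}^{u_{i_0}^0}\cdot\ldots$, or more simply that $a_{i_0}\le a_{i_0 i_0}^{1/2}|V|^{1/2}$ gives $a_{i_0 i_0}\ge a_{i_0}^2/|V|$, not directly a bound) — instead I would bound $a_{i_0 i_0}$ crudely by $\max_V \mathrm{e}^{2u_{i_0}^0}\int_V \mathrm{e}^{2w_{i_0}}d\mu$ and invoke the Trudinger–Moser estimate \eqref{mt} through \eqref{g} as in the proof of Lemma \ref{x4}, absorbing the resulting $\epsilon\|\nabla w_{i_0}\|_2^2$ into the good term $\frac{\gamma}{4}\sum_i\int_V\Gamma(w_i,w_i)d\mu$. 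Then $\mathrm{e}^{c_{i_0}} = \frac{1}{2R_{i_0}^2\alpha_{i_0 i_0}a_{i_0 i_0}}(\frac{R_{i_0}a_{i_0}}{P_{i_0}} + \sum_{j\ne i_0}\mathrm{e}^{c_j}R_{i_0}R_j\alpha_{i_0 j}a_{i_0 j})$, so
$$
\mathrm{e}^{c_{i_0}}a_{i_0} \le \frac{a_{i_0}}{2R_{i_0}^2\alpha_{i_0 i_0}a_{i_0 i_0}}\Bigl(\frac{R_{i_0}a_{i_0}}{P_{i_0}} + \sum_{j\ne i_0}\mathrm{e}^{c_j}R_{i_0}R_j\alpha_{i_0 j}a_{i_0 j}\Bigr);
$$
the first piece is $\frac{a_{i_0}^2}{2R_{i_0}P_{i_0}\alpha_{i_0 i_0}a_{i_0 i_0}} = \frac{2P_{i_0}b_{i_0}}{\lambda R_{i_0}} = O(\lambda^{-1})$ by the boundary relation, and for the cross terms I would use $a_{i_0 j}\le a_{i_0 i_0}^{1/2}a_{jj}^{1/2}$ from \eqref{713} and $\mathrm{e}^{c_j}a_{jj}^{1/2}\le \mathrm{e}^{c_j}a_j \cdot a_{jj}^{1/2}/a_j \le |V|\,a_{jj}^{1/2}/a_j$... which requires a lower bound on $a_j/a_{jj}^{1/2}$; alternatively bound $\mathrm{e}^{c_j}a_{i_0 j} \le \mathrm{e}^{c_j}a_{jj}^{1/2}a_{i_0 i_0}^{1/2}$ and use $\mathrm{e}^{c_j}\le$ (something $O(1)$) from an a priori bound on $t_j = \mathrm{e}^{c_j}$ via \eqref{716}–\eqref{717}, yielding $\mathrm{e}^{c_{i_0}}a_{i_0} \le C a_{i_0}/a_{i_0 i_0}^{1/2} \le C\sqrt{\lambda}\cdot\lambda^{-1} = C\lambda^{-1/2}$ after substituting $a_{i_0} = 2P_{i_0}\sqrt{\alpha_{i_0 i_0}b_{i_0}a_{i_0 i_0}/\lambda}$. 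Consequently $\frac{\lambda}{2}\frac{R_{i_0}}{P_{i_0}}(|V| - \mathrm{e}^{c_{i_0}}a_{i_0}) \ge \frac{\lambda|V|}{2}\frac{R_{i_0}}{P_{i_0}} - C\sqrt{\lambda} \ge \frac{\lambda|V|}{2}\min_i\{R_i/P_i\} - C\sqrt{\lambda}$.

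Putting the pieces together: from the displayed formula for $J(\mathbf{w})$, the $j=k$ diagonal part $\frac{1}{2}\sum b_{kj}\Gamma(v_k,v_j)$ is $\ge 0$ (or controlled exactly as in Lemma \ref{x4} down to $\frac{\gamma}{4}\sum_i\int_V\Gamma(w_i,w_i)d\mu \ge 0$); the terms $\frac{\lambda}{2}\frac{R_i}{P_i}(|V|-\mathrm{e}^{c_i}a_i)$ for $i\ne i_0$ are $\ge 0$ by \eqref{76}; the $i_0$-term is $\ge \frac{\lambda|V|}{2}\min_i\{R_i/P_i\} - C\sqrt\lambda$; and $\sum_i b_i c_i - \frac12\sum_i b_i \ge -C(1+\ln\lambda)$ using the lower bound $c_i \ge \ln\frac{2P_i b_i}{R_i} - \ln\bigl(\lambda\int_V \mathrm{e}^{u_i^0+w_i}d\mu\bigr)$ from \eqref{b11} together with the Trudinger–Moser control of $\ln\int_V \mathrm{e}^{u_i^0+w_i}d\mu$ from \eqref{b1} (the $\|\nabla w_i\|_2^2$ appearing there being absorbed into the nonnegative diagonal gradient term, exactly the $\epsilon$-trick of Lemma \ref{x4}). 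Adding, $J(\mathbf{w}) \ge \frac{\lambda|V|}{2}\min_i\{R_i/P_i\} - C(1+\ln\lambda+\sqrt\lambda)$, which is \eqref{4d}. The main obstacle I anticipate is getting a clean, $\lambda$-uniform upper bound on $\mathrm{e}^{c_{i_0}}a_{i_0}$ (equivalently, controlling the cross-terms $\sum_{j\ne i_0}\mathrm{e}^{c_j}R_{i_0}R_j\alpha_{i_0 j}a_{i_0 j}$) so that its product with $a_{i_0}=O(\lambda^{-1/2})$ is genuinely $o(\lambda)$ rather than $O(\lambda)$ — this is where the a priori estimates \eqref{76}–\eqref{77} and \eqref{716}–\eqref{717}, the Cauchy–Schwarz bound $a_{i_0 j}^2\le a_{i_0 i_0}a_{jj}$, and the boundary equality $a_{i_0}^2 = 4\alpha_{i_0 i_0}P_{i_0}^2 b_{i_0}a_{i_0 i_0}/\lambda$ must be combined carefully, and where one must make sure all constants $C$ remain independent of $\lambda$ and of $\mathbf{w}\in\partial\mathscr{A}$.
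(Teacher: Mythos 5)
Your overall architecture matches the paper's: on $\partial\mathscr{A}$ pick the index $i_0$ where \eqref{69} becomes an equality, show that $a_{i_0}\mathrm{e}^{c_{i_0}}$ is small (of order $\lambda^{-1/2}$), use \eqref{76} to discard the other summands of $\frac{\lambda}{2}\sum_i\frac{R_i}{P_i}\left(|V|-a_i\mathrm{e}^{c_i}\right)$ as nonnegative, and control $\sum_i b_ic_i$ through \eqref{b11} and \eqref{b1} with the gradient terms absorbed as in Lemma \ref{x4}. However, the one step that actually carries the lemma --- the $\lambda$-uniform bound on $a_{i_0}\mathrm{e}^{c_{i_0}}$ --- is not established in your argument, and the routes you sketch do not close. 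Working from the explicit root formula \eqref{611}, the cross terms force you to bound $\sum_{j\neq i_0}\mathrm{e}^{c_j}a_{i_0 j}$; using $\mathrm{e}^{c_j}\le 1$ from \eqref{77} together with $a_{i_0 j}\le a_{i_0 i_0}^{1/2}a_{jj}^{1/2}$ leaves a factor $a_{jj}^{1/2}=\left(\int_V\mathrm{e}^{2(u_j^0+w_j)}d\mu\right)^{1/2}$, and using $\mathrm{e}^{c_j}\le |V|/a_j$ from \eqref{76} leaves a factor $a_{jj}^{1/2}/a_j$, which Cauchy--Schwarz bounds only from \emph{below} (by $|V|^{-1/2}$), not from above. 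Neither quantity is bounded uniformly over $\mathbf{w}\in\partial\mathscr{A}$, so the constant in your claimed estimate $\mathrm{e}^{c_{i_0}}a_{i_0}\le C\,a_{i_0}/a_{i_0i_0}^{1/2}$ depends on $\mathbf{w}$; you correctly flag this as ``the main obstacle'' but do not resolve it.

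The paper's way around this is to not use the componentwise root formula at all, but instead to specialize the already-inverted matrix a priori estimate \eqref{716}--\eqref{717}, namely
$\operatorname{diag}\{a_1,\ldots,a_n\}\,\mathbf{t}\le \operatorname{diag}\{a_1a_{11}^{-1/2},\ldots\}\,Q^{-1}\operatorname{diag}\{a_1a_{11}^{-1/2},\ldots\}P^{-1}R\mathbf{1}$,
to the component $i_0=1$. There the off-diagonal contributions appear in the combination $a_ja_{jj}^{-1/2}$, which Cauchy--Schwarz \eqref{713} bounds by $|V|^{1/2}$ in the favorable direction, while the remaining factor $a_1a_{11}^{-1/2}$ is exactly the quantity pinned down by the boundary equality \eqref{450} to be $2P_1\sqrt{\alpha_{11}b_1/\lambda}$; the positivity of the entries of $Q^{-1}$ (from \eqref{715}) is what legitimizes the componentwise manipulation. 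This yields \eqref{115}, i.e. $a_1\mathrm{e}^{c_1}\le O(\lambda^{-1})+O(\lambda^{-1/2})$ with constants depending only on the fixed data, and then \eqref{b15} and the conclusion follow as you describe. So the missing idea is precisely this inversion: the estimate must be extracted from $\tilde{Q}\mathbf{t}\le P^{-1}R\mathbf{a}$ after multiplying by $Q^{-1}$, not from the scalar quadratic for $\mathrm{e}^{c_{i_0}}$.
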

\begin{proof}
For any $w\in \partial \mathscr{A}$,	from \eqref{A}, we know that at least one of the following equalities 
\begin{equation}\label{450}
	\frac{a_{i}^{2}}{a_{i i}}=\frac{4 \alpha_{i i} P_{i}^{2} b_{i}}{\lambda}, \quad i=1, \ldots, n
\end{equation}
holds.

Suppose the case $i=1$ happens, from \eqref{713} and \eqref{717}, we deduce that 
\begin{equation}\label{q}
	\begin{aligned}
		a_{1} \mathrm{e}^{c_{1}} & \leq \frac{R_{1}}{P_{1}}\left(Q^{-1}\right)_{11} a_{1}^{2} a_{11}^{-1}+\sum_{j=2}^{n} \frac{R_{j}}{P_{j}}\left(Q^{-1}\right)_{1 j} a_{1} a_{j} a_{11}^{-\frac{1}{2}} a_{j j}^{-\frac{1}{2}} \\
		& \leq \frac{R_{1}}{P_{1}}\left(Q^{-1}\right)_{11} a_{1}^{2} a_{11}^{-1}+|V|^{\frac{1}{2}} \sum_{j=2}^{n} \frac{R_{j}}{P_{j}}\left(Q^{-1}\right)_{1 j} a_{1} a_{11}^{-\frac{1}{2}}.
	\end{aligned}
\end{equation}
By \eqref{q} and \eqref{450}, we deduce that 
\begin{equation}\label{115}
	a_{1} \mathrm{e}^{c_{1}} \le  \left(Q^{-1}\right)_{11} \frac{4 P_{1} R_{1} b_{1} \alpha_{11}}{\lambda}+\frac{2 P_{1} \sum_{j=2}^{n} \frac{R_{j}}{P_{j}}\left(Q^{-1}\right)_{1 j}}{\sqrt{\lambda}} \sqrt{b_{1}|\textsc{V}| \alpha_{11}}.
\end{equation}
If other cases occur, we can establish the similar estimate.
From \eqref{76} and \eqref{115}, we deduce that 
\begin{equation}\label{b15}
	\begin{aligned}
		&\frac{\lambda}{2} \sum_{i=1}^{n} \frac{R_{i}}{P_{i}} \int\limits_{V}\left(1-\mathrm{e}^{c_{i}} \mathrm{e}^{u_{i}^{0}+w_{i}}\right) \mathrm{d} \mu \\
		&\geq \frac{|V| \lambda R_{1}}{2 P_{1}}-2\left(Q^{-1}\right)_{11} P_{1} R_{1} b_{1} \alpha_{11}-P_{1} \sum_{j=2}^{n} \frac{R_{j}}{P_{j}}\left(Q^{-1}\right)_{1 j} \sqrt{b_{1} \lambda|V| \alpha_{11}}.
	\end{aligned}
\end{equation}
From \eqref{b1}, \eqref{b11} and \eqref{b15}, we obtain the desired conclusion \eqref{4d}.
\end{proof}

\begin{Lemma}\label{f1}
	There exists $\mathbf{w}_{r_{\varepsilon}}\in int \mathscr{A}$ such that
	\begin{equation}
		J\left(\mathbf{w}_{r_{\varepsilon}}\right)-\inf _{\mathbf{w} \in \partial \mathscr{A}} J(\mathbf{w})<-1.
	\end{equation}
\end{Lemma}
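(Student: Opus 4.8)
The plan is to exhibit an explicit family of admissible test functions on which $J$ is controlled from above, and to show that some member of this family sits strictly inside $\mathscr{A}$ while beating the boundary infimum provided by Lemma \ref{u8}. First I would pick a single vertex $x_0 \in V$ and, for a parameter $r>0$, consider $\mathbf{w}_r$ whose components are (constant multiples of) a fixed mean-zero function $\phi$ that is large near $x_0$, scaled so that $\|\nabla w_i\|_2^2$ stays bounded but $\int_V e^{u_i^0+w_i}\,d\mu$ can be made as large as we like. Concretely one takes $w_i = r\,\psi_i$ for a fixed mean-zero $\psi_i$, or more simply $w_i \equiv w_{r}$ chosen so that $a_i(w_i) = \int_V e^{u_i^0+w_i}\,d\mu$ grows like $e^{cr}$ while $\|\nabla w_i\|_2^2$ grows only like $r^2$; the Moser--Trudinger estimate \eqref{g} is exactly what guarantees such a trade-off is possible. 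For $r$ large the inequalities \eqref{69} defining $\mathscr{A}$ become strict (the left side $a_i^2/a_{ii}$ is comparable to $|V|$ by Jensen, while the right side is $O(1/\lambda)$), so $\mathbf{w}_r \in \mathrm{int}\,\mathscr{A}$ for all sufficiently large $r$; but one must instead let $r \to 0^+$ or choose $r$ in a bounded range — see below — so I would rather fix a small but definite $r_\varepsilon$.

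The key computation is to bound $J(\mathbf{w}_{r})$ from above. Using the formula for $J$ derived just before Lemma \ref{jian}, namely
\begin{equation*}
	J(\mathbf{w}) = \frac{1}{2}\sum_{j,k=1}^n \int_V b_{kj}\,\Gamma(v_k,v_j)\,d\mu + \frac{\lambda}{2}\sum_{i=1}^n \frac{R_i}{P_i}\int_V\bigl(1 - e^{c_i}e^{u_i^0+w_i}\bigr)\,d\mu + \sum_{i=1}^n b_i c_i - \frac{1}{2}\sum_{i=1}^n b_i,
\end{equation*}
I would estimate each term. The gradient term is $O(1)$ for $\mathbf{w}_{r}$ with $r$ fixed. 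For the middle term the point is that $e^{c_i}$ solves the quadratic \eqref{68}, and when $a_i$ is large the relevant root is close to $\frac{1}{R_i P_i \alpha_{ii}}\frac{a_i}{a_{ii}}$, whence $e^{c_i}e^{u_i^0+w_i}$ integrates to something bounded below away from $0$, making $\frac{\lambda}{2}\sum \frac{R_i}{P_i}\int_V(1-e^{c_i}e^{u_i^0+w_i})\,d\mu$ of order at most $O(\lambda)$ but, crucially, with a coefficient strictly smaller than $\frac{|V|}{2}\min_i\{R_i/P_i\}$ that appears in Lemma \ref{u8}. The term $\sum b_i c_i$ is controlled because $c_i = \ln(e^{c_i})$ and $e^{c_i}$ is pinched between two explicit quantities of size $O(1)$ and $O(1/\lambda)$, so $|c_i| = O(\ln\lambda)$. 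Assembling, $J(\mathbf{w}_{r_\varepsilon}) \le \theta\,|V|\lambda\,\min_i\{R_i/P_i\} + C(1+\ln\lambda)$ for some $\theta < \tfrac12$, and comparing with \eqref{4d} gives $J(\mathbf{w}_{r_\varepsilon}) - \inf_{\partial\mathscr{A}}J < -1$ once $\lambda$ is large (which is the regime of interest, since $\lambda_1$ will in any case be chosen large in part (ii) of Theorem \ref{31}).

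The main obstacle I anticipate is the two-sided control of $e^{c_i}$ and hence of the middle term's coefficient: one must verify that along the chosen test family the selected root of the quadratic \eqref{68} really does make $\int_V e^{c_i}e^{u_i^0+w_i}\,d\mu$ stay bounded below by a definite positive constant (so that the $\frac{\lambda}{2}\sum \frac{R_i}{P_i}\int_V 1\,d\mu = \frac{\lambda|V|}{2}\sum\frac{R_i}{P_i}$ piece is genuinely reduced), rather than degenerating. This forces a careful choice of $\mathbf{w}_{r_\varepsilon}$: one wants $a_i(w_i)$ not too large (else $\mathbf{w}$ could exit $\mathscr{A}$ or the estimates on $c_i$ blow up) and not too small (else $e^{c_i}e^{u_i^0+w_i}$ is tiny and the gain over the boundary is lost). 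The cleanest route is to take $w_i$ to be a small perturbation — e.g. $w_i = \varepsilon_i\,\chi_i$ with $\chi_i$ a fixed mean-zero bump and $\varepsilon_i$ chosen so that $\int_V e^{u_i^0+w_i}\,d\mu$ equals a prescribed constant strictly between $0$ and $|V|\bigl(4P_i^2 b_i\alpha_{ii}/\lambda\bigr)^{-1/2}a_{ii}^{1/2}$-type threshold — making membership in $\mathrm{int}\,\mathscr{A}$ transparent and keeping every quantity in \eqref{611} uniformly nondegenerate, after which the inequality $J(\mathbf{w}_{r_\varepsilon}) - \inf_{\partial\mathscr{A}}J < -1$ drops out of Lemma \ref{u8} and the elementary bounds above.
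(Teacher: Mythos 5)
Your overall strategy is the right one (produce an explicit interior test configuration and play its $J$-value off against the boundary lower bound of Lemma \ref{u8}), but the proposal has a genuine gap exactly at the point you flag as the ``main obstacle'': the construction of the test function. To beat $\inf_{\partial\mathscr{A}}J\geq\frac{|V|\lambda}{2}\min_i\{R_i/P_i\}-C(1+\ln\lambda+\sqrt{\lambda})$ you must make the coefficient of $\lambda$ in the middle term $\frac{\lambda}{2}\sum_i\frac{R_i}{P_i}\int_V(1-e^{c_i}e^{u_i^0+w_i})\,d\mu$ \emph{arbitrarily small}, not merely ``strictly smaller than $\frac{|V|}{2}\min_i\{R_i/P_i\}$'': since every summand is nonnegative (by \eqref{76} one has $a_ie^{c_i}\leq|V|$), even a single index with $|V|-a_ie^{c_i}\geq\kappa|V|$ for a fixed $\kappa>0$ contributes $\frac{\lambda|V|}{2}\frac{R_i}{P_i}\kappa$, which need not be dominated by $\frac{|V|\lambda}{2}\min_j\{R_j/P_j\}$. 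Near-saturation $a_ie^{c_i}\approx|V|$ forces, through the chain of estimates behind \eqref{76} (in particular the Cauchy--Schwarz inequality $a_i^2\leq|V|a_{ii}$ of \eqref{713}), near-equality in Cauchy--Schwarz, i.e.\ $e^{u_i^0+w_i}$ must be nearly \emph{constant} on $V$, i.e.\ $w_i$ must be close to $-u_i^0$ modulo an additive constant. Neither of your candidate families achieves this: a bump concentrated at a vertex makes $e^{u_i^0+w_i}$ highly nonconstant, and a small perturbation $w_i=\varepsilon_i\chi_i$ of zero leaves the oscillation of $u_i^0$ intact, so $|V|-a_ie^{c_i}$ stays bounded below by a positive multiple of $|V|$ determined by $u_i^0$, and the comparison with \eqref{4d} fails. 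Prescribing only the single number $\int_Ve^{u_i^0+w_i}\,d\mu$, as in your last paragraph, does not fix this, because $a_ie^{c_i}$ depends on the ratios $a_i^2/a_{ii}$ and on the off-diagonal $a_{ij}$, not on $a_i$ alone.

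The missing idea, which is the actual content of the paper's proof, is the specific choice of test functions: take $v_{i,r}$ to be the maximal solutions of the decoupled scalar equations $\Delta v=re^{u_i^0+v}(e^{u_i^0+v}-1)+4\pi N_i/|V|$, which by Proposition \ref{f2} satisfy $v_{i,r}\to-u_i^0$ uniformly as $r\to\infty$, and let $w_{i,r}$ be their mean-zero parts. Then $a_i,a_{ij}\to|V|$ and $\tilde{Q}(\mathbf{w}_r)\to|V|Q$, and the algebraic identity $Q^{-1}P^{-1}R\mathbf{1}=\mathbf{1}$ (a consequence of \eqref{518}) converts the quadratic constraint \eqref{611} into the componentwise lower bound $e^{c_i(\mathbf{w}_{r_\varepsilon})}\geq(1-\varepsilon)-O(1/\lambda)$, whence $\int_V(1-e^{c_i}e^{u_i^0+w_i})\,d\mu\leq\varepsilon|V|+O(1/\lambda)$ and $J(\mathbf{w}_{r_\varepsilon})\leq\frac{|V|\lambda\varepsilon}{2}\sum_iR_i/P_i+C_\varepsilon$; choosing $\varepsilon$ small and $\lambda$ large then gives the claim. (On a finite graph one could even take $w_i=-u_i^0$ outright, since $u_i^0$ is a bounded function; the approximation by $v_{i,r}$ is what survives from the continuum setting.) Without some choice of $\mathbf{w}$ forcing $e^{u_i^0+w_i}$ to be nearly constant, the argument does not close.
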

\begin{proof}
 	By Proposition \ref{f2}, we see that, for all sufficiently large $r>0$, the problem 
 	\begin{equation}
 		\Delta v=r \mathrm{e}^{u_{i}^{0}+v}\left(\mathrm{e}^{u_{i}^{0}+v}-1\right)+\frac{4 \pi N_{i}}{|V|}, \quad i=1, \ldots, n,
 	\end{equation}
 has solutions $v_{i,r}(i=1,...,n)$ 	so that $v_{i,r} \to -u_{i}^{0}$ as $r\to +\infty$ uniformly for $x\in V$. Let $c_{i,r}:=\frac{1}{|V|}\int\limits_{V} v_{i,r} d\mu$. It follows that $w_{i,r}:=v_{i,r}-c_{i,r} \to -u_{i}^{0}$ as $r\to +\infty$, $i=1,...,n.$ Thus we have 
 	\begin{equation}\label{456}
 		\lim _{\mu \rightarrow \infty} a_{i}(w_{i,r})=|V|, \quad \lim _{\mu \rightarrow \infty}a_{ij}( w_{i,r},w_{j,r} )=|V|, \quad i, j=1, \ldots, n.
 	\end{equation}
 	By \eqref{62}, we obtain 
 	\begin{equation}\label{457}
 		\lim _{r \rightarrow \infty} \tilde{Q}\left(\mathbf{w}_{r}\right)=|V| Q .
 	\end{equation}
 By \eqref{457} and \eqref{456}, we can find $\sigma>0$ such that for any $\epsilon\in (0,1)$, there exists $r_{\epsilon}>0$ so that 
 \begin{equation}
 	\mathbf{w}_{r_{\varepsilon}}=\left(w_{1,r_{\varepsilon}}, \ldots, w_{n,r_{\varepsilon}}\right)^{T} \in \text { int } \mathscr{A}
 \end{equation}	
 	for all $\lambda>\sigma$, and \begin{equation}\label{459}
 		\begin{aligned}
 			&a_{i j}\left(w_{i,r_{\varepsilon}}, w_{j,r_{\varepsilon}}\right)<(1+\varepsilon)|V|<2|V|, \quad i, j=1, \ldots, n \\
 			&\frac{(1-\varepsilon)}{|V|} Q^{-1}<\tilde{Q}^{-1}\left(\mathbf{w}_{r_{\varepsilon}}\right)<\frac{(1+\varepsilon)}{|V|} Q^{-1}<\frac{2}{|V|} Q^{-1}.
 		\end{aligned}
 	\end{equation}                Since $\mathbf{w}_{r_{\varepsilon}}\in \text { int } \mathscr{A}$, by \eqref{67} and the fact that
 $$\sqrt{1-2x}\ge 1-2x\text{~for~}x\in[0,\frac{1}{2}] ,$$   we deduce that \begin{equation}
 	\begin{aligned}
 		&\mathrm{e}^{c_{i}\left(\mathbf{w}_{r_{\varepsilon}}\right)}=\frac{\frac{R_{i} a_{i}}{P_{i}}+\sum\limits_{j \neq i} \mathrm{e}^{c_{j}\left(\mathbf{w}_{r_{\varepsilon}}\right)} R_{i} R_{j} \alpha_{i j} a_{i j}}{2 R_{i}^{2} \alpha_{i i} a_{i i}}\\
 		&\times\left(1+\sqrt{1-\frac{4 b_{i} R_{i}^{2} \alpha_{i i} a_{i i}}{\lambda\left(\frac{R_{i} a_{i}}{P_{i}}+\sum\limits_{j \neq i} \mathrm{e}^{\left.c_{j} R_{i} R_{j} \alpha_{i j} a_{i j}\right)^{2}}\right.}}\right)\\
 		&\geq \frac{\frac{R_{i} a_{i}}{P_{i}}+\sum_{j \neq i} \mathrm{e}^{c_{j}\left(\mathbf{w}_{r_{\varepsilon}}\right)} R_{i} R_{j} \alpha_{i j} a_{i j}}{R_{i}^{2} \alpha_{i i} a_{i i}}-\frac{2 b_{i}}{
 			\lambda\left(\frac{R_{i} a_{i}}{P_{i}}+\sum\limits_{j \neq i} \mathrm{e}^{c_{j} R_{i} R_{j} \alpha_{i j} a_{i j}}
 			\right)
 		}.
 	\end{aligned}
 \end{equation}  
 By \eqref{k1} and Jensen inequlity, we conclude that 	
 \begin{equation}\label{w}
 	\mathrm{e}^{c_{i}\left(\mathbf{w}_{r_{\varepsilon}}\right)}
 	\geq \frac{\frac{R_{i}|V|}{P_{i}}+\sum\limits_{j \neq i} \mathrm{e}^{c_{j}\left(\mathbf{w}_{r_{\varepsilon}}\right)} R_{i} R_{j} \alpha_{i j} a_{i j}}{R_{i}^{2} \alpha_{i i} a_{i i}}-\frac{2 P_{i} b_{i}}{\lambda|V| R_{i}}, \quad i=1, \ldots, n.
 \end{equation}	
 From now on, we understand
 \begin{equation}
 	a_{i}=a_{i}\left(w_{i,r_{\varepsilon}} \right), \quad a_{i j}=a_{i j}\left(w_{i,r_{\varepsilon}}, w_{j,r_{\varepsilon}}\right), \quad i, j=1, \ldots, n.
 \end{equation}	

Then by \eqref{w} and \eqref{459}, we deduce that 
\begin{equation}\label{eb}
	\begin{aligned}
		R_{i}^{2} \alpha_{i i} a_{i i} \mathrm{e}^{c_{i}\left(\mathrm{w}_{r_{\varepsilon}}\right)}-\sum_{j \neq i} \mathrm{e}^{c_{j}\left(\mathbf{w}_{r_{\varepsilon}}\right)} R_{i} R_{j} \alpha_{i j} a_{i j} & \geq \frac{R_{i}|V|}{P_{i}}-\frac{2 P_{i} R_{i} b_{i}}{\lambda|V|} \alpha_{i i} a_{i i} \\
		& \geq \frac{R_{i}|V|}{P_{i}}-\frac{4 \alpha_{i i} P_{i} R_{i} b_{i}}{\lambda}, \quad i=1, \ldots, n.
	\end{aligned}
\end{equation}
By \eqref{459} and \eqref{eb}, we conclude that 
\begin{equation}\label{l}
	\begin{aligned}
		&\left(\mathrm{e}^{c_{1}\left(\mathbf{w}_{r_{\varepsilon}}\right)}, \ldots, \mathrm{e}^{c_{n}\left(\mathbf{w}_{r_{\varepsilon}}\right)}\right)^{T} \\
		&\geq|\Omega| \tilde{Q}^{-1}\left(\mathbf{w}_{r_{\varepsilon}}\right) P^{-1} R \mathbf{1}-\frac{4 \tilde{Q}^{-1}\left(\mathbf{w}_{r_{\varepsilon}}\right) P R}{\lambda} \operatorname{diag}\left\{\alpha_{11}, \ldots, \alpha_{n n}\right\} \mathbf{b} \\
		&\geq(1-\varepsilon) Q^{-1} P^{-1} R \mathbf{1}-\frac{8 Q^{-1} P R}{\lambda|\Omega|} \operatorname{diag}\left\{\alpha_{11}, \ldots, \alpha_{n n}\right\} \mathbf{b} \\
		&=(1-\varepsilon) \mathbf{1}-\frac{8 Q^{-1} P R}{\lambda|\Omega|} \operatorname{diag}\left\{\alpha_{11}, \ldots, \alpha_{n n}\right\} \mathbf{b} .
	\end{aligned}
\end{equation}
It follows that 
\begin{equation}
	\int\limits_{V}\left(1-\mathrm{e}^{c_{i}\left(\mathbf{w}_{r_{\varepsilon}}\right)} \mathrm{e}^{u_{i}^{0}+w_{i}^{\mu_{\varepsilon}}}\right) \mathrm{d} \mu \leq|V| \varepsilon+\frac{8}{\lambda} \sum_{i=1}^{n}\left(Q^{-1}\right)_{i j} P_{j} R_{j} b_{j} \alpha_{j j}, \quad i=1, \ldots, n.
\end{equation}
By \eqref{77}, there exists a constant $C_{\epsilon}$ such that
	\begin{equation}
		J\left(\mathbf{w}_{r_{\varepsilon}}\right) \leq \frac{|V| \lambda \varepsilon}{2} \sum_{j=1}^{n} \frac{R_{i}}{P_{i}}+C_{\varepsilon}.
	\end{equation}
By Lemma \ref{u8}, this implies that there exists $C$ independent of $\lambda$ so that
\begin{equation}\label{f}
	J\left(\mathbf{w}_{r_{\varepsilon}}\right)-\inf _{\mathbf{w} \in \partial \mathscr{A}} J(\mathbf{w}) \leq \frac{|V| \lambda}{2}\left(\sum_{j=1}^{n} \frac{R_{i}}{P_{i}} \varepsilon-\min _{1 \leq i \leq n}\left\{\frac{R_{i}}{P_{i}}\right\}\right)+C(\sqrt{\lambda}+\ln \lambda+1).
\end{equation}
We can get \eqref{f1} by taking $\epsilon$ suitably small and $\lambda$ sufficiently large in \eqref{f}.

We now prove the lemma.
\end{proof}
From Lemmas \ref{x4} and \ref{f1}, there exists $\lambda_{2}:= \max\{ \sigma,\lambda_{0} \}$ such that for all $\lambda>\lambda_{2}$, we can find  $\mathbf{w}_{0} \in int \mathscr{A}$ such that $\mathbf{w}_{0}$ is a minimizer of $J$.   It is easy to check that $\mathbf{v}_{0}:= \mathbf{w}_{0}+ \mathbf{c}({\mathbf{w}_{0}})$ is a critical point of $I$, which implies that $\mathbf{v}_{0}$ is a solution of equations \eqref{510}. Thus, we could establish the conclusion (ii) of Theorem \ref{31}.

\end{document}